\numberwithin{equation}{section}
\renewcommand\tableofcontents{%
    \section*{\huge{Table of Contents}
        \@mkboth{%
           \MakeUppercase\contentsname}{\MakeUppercase\contentsname}}
    \@starttoc{toc}%
    } 
\newsavebox\MBox
\newtheorem{theorem}{Theorem}[section]
\newtheorem{proposition}[theorem]{Proposition}
\newtheorem{corollary}[theorem]{Corollary}
\newtheorem{lemma}[theorem]{Lemma}
\theoremstyle{definition} \newtheorem{remark}[theorem]{Remark}
\numberwithin{figure}{section}
\newcommand{\bR}{{\mathbb R}}
\newcommand{\bN}{{\mathbb N}}
\newcommand{\bC}{{\mathbb C}}
\newcommand{\bZ}{{\mathbb Z}}
\newcommand{\bP}{{\mathbb P}}
\newcommand{\bS}{{\mathbb S}}
\newcommand{\cF}{{\mathcal{F}}}
\newcommand{\cI}{{\mathcal{I}}}
\title[a.s. scattering for the 4D energy-critical NLW with radial data]{Almost sure scattering for the 4D energy-critical defocusing nonlinear wave equation with radial data}
\author[B. Dodson]{Benjamin Dodson}
\address{Department of Mathematics \\ Johns Hopkins University \\ Krieger Hall 214 \\ 3400 N. Charles Street \\ Baltimore, MD 21218, USA}
\email{bdodson4@jhu.edu}
\author[J. L\"uhrmann]{Jonas L\"uhrmann}
\address{Department of Mathematics \\ Johns Hopkins University \\ Krieger Hall 219 \\ 3400 N. Charles Street \\ Baltimore, MD 21218, USA}
\email{luehrmann@math.jhu.edu}
\author[D. Mendelson]{Dana Mendelson}
\address{Department of Mathematics \\ University of Chicago \\ Eckhart 220 \\ 5734 S. University Ave \\ Chicago, IL 60637, USA}
\email{dana@math.uchicago.edu}
\thanks{\textit{2010 Mathematics Subject Classification.} 35L05, 35R60, 35Q55}
\thanks{\textit{Key words and phrases.} nonlinear wave equation; almost sure scattering; random initial data}
\thanks{The first author was supported in part by National Science Foundation grant DMS-1500424.}
\begin{document}

\begin{abstract}
 We consider the energy-critical defocusing nonlinear wave equation on $\bR^4$ and establish almost sure global existence and scattering for randomized radially symmetric initial data in $H^s_x(\bR^4) \times H^{s-1}_x(\bR^4)$ for $\frac{1}{2} < s < 1$. This is the first almost sure scattering result for an energy-critical dispersive or hyperbolic equation with scaling super-critical initial data. The proof is based on the introduction of an approximate Morawetz estimate to the random data setting and new large deviation estimates for the free wave evolution of randomized radially symmetric data.
\end{abstract}

\maketitle

\section{Introduction} \label{sec:introduction}
\setcounter{equation}{0}

We consider the Cauchy problem for the energy-critical defocusing nonlinear wave equation in four space dimensions
\begin{equation} \label{equ:ivp}
 \left\{ \begin{aligned}
  -\partial_t^2 u + \Delta u &= u^3 \text{ on } \bR \times \bR^4, \\
  (u, \partial_t u)|_{t=0} &= (f_0, f_1) \in H^s_x(\bR^4) \times H^{s-1}_x(\bR^4).
 \end{aligned} \right.
\end{equation}
The purpose of this article is to study the asymptotic behavior of solutions to \eqref{equ:ivp} for rough and random initial data below the scaling critical regularity. Our main result establishes the almost sure global existence and scattering of solutions to~\eqref{equ:ivp} with respect to a unit-scale randomization in frequency space of radially symmetric initial data in $H^s_x(\bR^4) \times H^{s-1}_x(\bR^4)$ for $\frac{1}{2} < s < 1$.

\medskip

The equation \eqref{equ:ivp} is invariant under the scaling
\begin{equation} \label{equ:scaling_cubic}
 u(t,x) \mapsto \lambda u(\lambda t, \lambda x) \quad \text{for } \lambda > 0,
\end{equation}
and the scaling critical regularity $s_c = 1$ is by definition such that the corresponding homogeneous Sobolev norms of the initial data are left invariant by the scaling transformation~\eqref{equ:scaling_cubic}. Sufficiently regular solutions to~\eqref{equ:ivp} conserve the energy 
\begin{equation} \label{equ:energy_functional}
 E(u) =  \int_{\bR^4} \frac{1}{2} |\nabla_x u|^2 + \frac{1}{2} |\partial_t u|^2 + \frac{1}{4} |u|^{4} \, dx,
\end{equation}
and since this energy functional is also invariant under the scaling~\eqref{equ:scaling_cubic}, the Cauchy problem for~\eqref{equ:ivp} is referred to as energy-critical.

\medskip 

The energy-critical defocusing nonlinear wave equation has by now become the subject of a vast body of literature that we cannot review here in its entirety. We therefore focus on the most relevant results for this paper. In \cite{Lindblad_Sogge}, Lindblad and Sogge construct local strong solutions to \eqref{equ:ivp} for sub-critical and critical regularities $s \geq s_c$ using Strichartz estimates for the wave equation. When $s < s_c$, this is the super-critical regime and the local well-posedness arguments based on Strichartz estimates break down. In fact, the Cauchy problem~\eqref{equ:ivp} is ill-behaved below the scaling critical regularity, see for instance \cite{CCT}, \cite{L} and \cite{IMM}. For initial data in the energy class, i.e. when $s=1$, global well-posedness, scattering, a priori bounds on scattering norms and concentration compactness properties of the solutions to~\eqref{equ:ivp} have been established in the classical works of Struwe~\cite{Struwe}, Grillakis~\cite{Grillakis}, Ginibre-Soffer-Velo~\cite{Ginibre_Soffer_Velo}, Shatah-Struwe~\cite{Shatah_Struwe}, Bahouri-Shatah~\cite{Bahouri_Shatah}, Bahouri-G\'erard~\cite{Bahouri_Gerard}, Nakanishi~\cite{Nakanishi}, and Tao~\cite{Tao}.

\medskip

Even though the nonlinear wave equation \eqref{equ:ivp} is ill-posed below the scaling critical regularity $s_c = 1$, it is sometimes possible to construct ``unique'' local and even global solutions for suitably randomized initial data, and thereby conclude that large sets of initial data of super-critical regularity do indeed lead to global solutions. We refer to Remark \ref{rem:uniqueness} for the precise notion of uniqueness in this context. This approach was initiated by Bourgain~\cite{B94, B96} for the periodic nonlinear Schr\"odinger equation in dimensions one and two, building upon the constructions of invariant measures in~\cite{Glimm_Jaffe} and \cite{LRS}. Subsequently, Burq-Tzvetkov studied the cubic nonlinear wave equation on a three-dimensional compact manifold. By randomizing with respect to an orthonormal eigenbasis of the Laplacian, they proved almost sure local well-posedness for initial data of super-critical regularity~\cite{BT1}. Using invariant measure considerations they also established almost sure global well-posedness for radial random data on the three-dimensional Euclidean unit ball~\cite{BT2}. In recent years extensive work has been done on Cauchy problems for dispersive and hyperbolic equations with random initial data, both in compact and non-compact settings. 

\medskip

On Euclidean space, which is our current setting, no orthonormal basis of eigenfunctions of the Laplacian exists, and there is no known invariant measure for \eqref{equ:ivp}. Many previous results on Euclidean space involve first considering a related equation in a setting where an orthonormal eigenbasis for the Laplacian exists, see for instance \cite{BTT, D1, Poiret, Poiret_Robert_Thomann, Suzzoni1, Suzzoni2, Thomann}. It is also possible, however, to randomize initial data directly on Euclidean space using a unit-scale decomposition of frequency space, see definition~\eqref{equ:bighsrandomization} below. As in the works of Burq-Tzvetkov discussed above, the main feature of this unit-scale randomization is that the free wave evolution of the randomized data possesses almost surely much better integrability properties. In many cases this approach has yielded probabilistic local and global existence results for super-critical initial data, see for example \cite{ZF, LM, BOP2, BOP1, Pocovnicu, OP, LM2, Brereton}, as well as Remark \ref{rmk:unit-scale} for further discussion. We take this direct approach in our present work. 

\medskip

We mainly restrict the following overview to probabilistic results for power-type nonlinear wave equations on Euclidean space. In \cite{LM}, the second and third authors proved almost sure global existence for energy sub-critical nonlinear wave equations on~$\bR^3$ with scaling super-critical initial data using a probabilistic local existence argument and Bourgain's high-low frequency decomposition~\cite{B98}. This approach was previously introduced by Colliander-Oh~\cite{CO} in the context of the one-dimensional periodic defocusing cubic nonlinear Schr\"odinger equation. The results of \cite{LM} were later improved by the second and third authors~\cite{LM2} and by Sun-Xia~\cite{SX}. For energy-critical nonlinearities, Pocovnicu~\cite{Pocovnicu} established almost sure global well-posedness for the energy-critical defocusing nonlinear wave equation on $\bR^4$ for regularities $s > 0$ and on $\bR^5$ for $s \geq 0$. Subsequently, Oh-Pocovnicu~\cite{OP} treated the energy-critical nonlinear wave equation on $\bR^3$ for regularities $s > \frac{1}{2}$. These proofs use probabilistic perturbation theory together with probabilistic a priori energy bounds. Finally, we refer to \cite{BOP2, BOP1, Brereton} for probabilistic well-posedness results for nonlinear Schr\"odinger equations on Euclidean space. 

\medskip 

Of the aforementioned results, those relying on the unit-scale randomization procedure do not yield any almost sure statement about the asymptotic behavior of the solutions for random data of super-critical regularity. In many cases probabilistic small data scattering is known. For energy-critical defocusing nonlinear wave equations, which is our current context, this was first proved by the second and third authors~\cite[Theorem 1.6]{LM} on $\bR^3$, with an identical proof working also on $\bR^d$, $d = 4,5$, see~\cite[Theorem 1.2]{Pocovnicu}. In Theorem~\ref{thm:random_scattering} we establish the first almost sure scattering result for an energy-critical equation, more precisely, we prove almost sure scattering for the defocusing nonlinear wave equation on $\bR^4$ for randomized radially symmetric data of super-critical regularity. The main novelties of our work are the introduction of an approximate Morawetz identity to the random data setting, as well as the introduction of new large deviation estimates for global weighted space-time norms of the free wave evolution of randomized radially symmetric data.

\subsection{Randomization procedure}

Before turning to the statements of our main results, we introduce the randomization procedure for the initial data. Let $\psi \in C_c^{\infty}(\bR^4)$ be an even, non-negative function with $\text{supp} (\psi) \subseteq B(0,1)$ and such that 
\[
 \sum_{k \in \bZ^4} \psi(\xi - k) = 1 \quad \text{for all } \xi \in \bR^4.
\]
Let $s \in \bR$ and let $f \in H^s_x(\bR^4)$. For every $k \in \bZ^4$, we define the function $P_k f: \bR^4 \rightarrow \bC$ by
\begin{equation}\label{equ:unit_scale_proj}
 (P_k f)(x) = \cF^{-1} \left( \psi(\xi - k) \hat{f}(\xi) \right)(x) \quad \text{for } x \in \bR^4.
\end{equation} 
By requiring the cut-off $\psi$ to be even, we ensure that real-valued functions $f$ satisfy the symmetry condition
\begin{equation} \label{equ:symmetry}
 \overline{P_k f} = P_{-k} f.
\end{equation}
We crucially exploit that these Fourier projections satisfy a unit-scale Bernstein inequality, namely for all $1 \leq r_1 \leq r_2 \leq \infty$ we have that 
\begin{align} \label{equ:unit_scale_bernstein}
  \|P_k f\|_{L^{r_2}_x(\bR^4)} \leq C(r_1, r_2) \|P_k f\|_{L^{r_1}_x(\bR^4)}
\end{align}
with a constant which is independent of $k \in \bZ^4$.

\medskip

We let $\{ (g_k, h_k) \}_{k \in \bZ^4}$  be a sequence of zero-mean, complex-valued Gaussian random variables on a probability space $(\Omega, {\mathcal A}, \bP)$ with the symmetry condition $g_{-k} = \overline {g_k}$ and $h_{-k} = \overline {h_k}$  for all $k \in \bZ^4$. We assume that $\{g_0, \textup{Re}(g_k), \textup{Im}(g_k)\}_{k \in \cI}$ are independent, zero-mean, real-valued random variables, where $\cI \subset \bZ^4$ is such that we have a \textit{disjoint} union $\bZ^4 = \cI \cup (-\cI) \cup \{0\}$, and similarly for the $h_k$. 

\medskip

Given a pair of real-valued functions $(f_0, f_1) \in H^s_x(\bR^4) \times H^{s-1}_x(\bR^4)$ for $s \in \bR$, we define its randomization by
\begin{equation} \label{equ:bighsrandomization} 
 (f_0^{\omega}, f_1^{\omega}) := \biggl( \sum_{k \in \bZ^4} g_k(\omega) P_k f_0, \sum_{k \in \bZ^4} h_k(\omega) P_k f_1 \biggr).
\end{equation}
This quantity is understood as a Cauchy limit in $L^2_\omega\bigl(\Omega; H^s_x(\bR^4) \times H^{s-1}_x(\bR^4)\bigr)$, and  in the sequel, we will restrict ourselves to a subset $\Sigma \subset \Omega$ with $\bP(\Sigma) = 1$ such that $(f_0^\omega, f_1^\omega) \in H^s_x(\bR^4) \times H^{s-1}_x(\bR^4)$ for every $\omega \in \Sigma$. The randomization almost surely does not regularize at the level of Sobolev spaces, see for instance \cite[Lemma B.1]{BT1}, and the symmetry assumptions on the random variables as well as \eqref{equ:symmetry} ensure that the randomization of real-valued initial data is real-valued. 

\medskip 

We denote the free wave evolution of the random initial data $(f^\omega_0, f^\omega_1)$ by
\begin{align} \label{equ:free_evolution}
 S(t)(f^\omega_0, f^\omega_1) = \cos(t|\nabla|) f_0^\omega + \frac{\sin(t|\nabla|)}{|\nabla|} f_1^\omega.
\end{align}
As mentioned above, an important feature of the unit-scale randomization~\eqref{equ:bighsrandomization} is that the free wave evolution of the random data satisfies significantly improved space-time integrability properties, as demonstrated by Proposition~\ref{prop:large_deviation_L3L6} and Proposition~\ref{prop:large_deviation_L2Linfty} in this work. This property is akin to the classical results of Paley and Zygmund~\cite{Paley_Zygmund1} on the improved integrability of random Fourier series.

\begin{remark} \label{rmk:unit-scale}
We use the terminology ``unit-scale randomization'' for \eqref{equ:bighsrandomization} in reference to the unit-sized supports of the frequency multipliers $\psi_k(\xi)$. Similar randomizations have previously been used in Euclidean space, first in \cite{ZF}, and subsequently in \cite{LM}, \cite{BOP2, BOP1}, and several works referenced above. In \cite{BOP2, BOP1} the terminology ``Wiener randomization'' was used to highlight the connection between the randomization~\eqref{equ:bighsrandomization}, the Wiener decomposition~\cite{Wiener}, and the modulation spaces introduced by H. Feichtinger~\cite{Feichtinger}.
\end{remark}

\begin{remark}
One may also randomize with respect to a more general sequence of random variables $\{ (g_k, h_k) \}_{k\in\bZ^4}$ satisfying the following condition: let $\mu_{k}$ and $\nu_{k}$ be the joint distributions of the real and imaginary parts of the random variables $g_k$ and $h_k$, respectively, so that there exists $c > 0$ with
\begin{equation} \label{eq:rvassumption}
 \left| \int_{-\infty}^{+\infty} e^{\gamma x} \, d\mu_{k}(x) \right| \leq e^{c \gamma^2} \quad \text{for all } \gamma \in \bR \text{ and for all } k \in \bZ^4,
\end{equation}
and similarly for $\nu_{k}$. The assumption \eqref{eq:rvassumption} is satisfied, for example, by standard Gaussian random variables, standard Bernoulli random variables, or any random variables with compactly supported distributions. 
\end{remark}

We emphasize that given a pair of \emph{radially symmetric} functions $(f_0, f_1)$, the resulting random data $(f_0^\omega, f_1^\omega)$ is \emph{not} radially symmetric. This is due to the fact that multiplying the non-radial symbols of the Fourier projections $P_k$ as in~\eqref{equ:unit_scale_proj} by independent Gaussian random variables destroys the radial symmetry upon summation. Nonetheless, we are able to exploit the underlying radial symmetry of the pair $(f_0, f_1)$ in the proof of a key large deviation estimate for a weighted space-time norm of the free wave evolution of the associated random data, see Proposition~\ref{prop:large_deviation_L2Linfty}. A crucial ingredient in this proof, which is the content of Lemma~\ref{lem:radialish_sobolev}, is the fact that for radially symmetric $f \in H^s_x(\bR^4)$, one has
\begin{equation} \label{equ:radialish_sobolev_intro}
 \biggl\| |x|^{\frac{3}{2}} \Bigl( \sum_{k \in \bZ^4} \bigl| P_k f(x) \bigr|^2 \Bigr)^{\frac{1}{2}} \biggr\|_{L^\infty_x(\bR^4)} \lesssim \| f \|_{H^s_x(\bR^4)}
\end{equation}
for any $s > 0$. The estimate \eqref{equ:radialish_sobolev_intro} demonstrates that the square-function corresponding to unit-scale projections of radially symmetric functions, given by
\[
 \Bigl( \sum_{k \in \bZ^4} \bigl| P_k f(x) \bigr|^2 \Bigr)^{\frac{1}{2}}
\]
satisfies a radial Sobolev-type estimate on $\bR^4$. One should also compare~\eqref{equ:radialish_sobolev_intro} to the usual radial Sobolev estimate in four space dimensions
\[
 \bigl\| |x|^{\frac{3}{2}} f \bigr\|_{L^\infty_x(\bR^4)} \lesssim \| f \|_{H^1_x(\bR^4)}.
\]

\subsection{Statement of main results and strategy of proof}

Let $(f_0, f_1) \in H^s_x(\bR^4) \times H^{s-1}_x(\bR^4)$ be a pair of real-valued, radially symmetric functions of super-critical regularity $s < 1$ and denote by $(f_0^\omega, f_1^\omega)$ the associated random initial data defined in~\eqref{equ:bighsrandomization}. In order to study the asymptotic behavior of the solution $u(t)$ to the energy-critical cubic nonlinear wave equation~\eqref{equ:ivp} with random data $(f_0^\omega, f_1^\omega)$, we express $u(t)$ as 
\begin{equation}
 u(t) = S(t)(P_{>4} f_0^\omega, P_{>4} f_1^\omega) + v(t)
\end{equation}
and we investigate the asymptotic behavior of the nonlinear component $v(t)$ satisfying the difference equation
\begin{equation} \label{equ:forced_cubic_random}
 \left\{ \begin{aligned}
  -\partial_t^2 v + \Delta v &= \bigl( S(t)( P_{> 4} f_0^\omega, P_{>4} f_1^\omega ) + v \bigr)^3 \text{ on } \bR \times \bR^4, \\
  (v, \partial_t v)|_{t=0} &= (P_{\leq 4} f_0^\omega , P_{\leq 4} f_1^\omega ) \in \dot{H}^1_x(\bR^4) \times L^2_x(\bR^4).
 \end{aligned} \right.
\end{equation}
Here, the projections $P_{\leq 4}$ and $P_{>4}$ are the usual dyadic Littlewood-Paley projections defined in Section~\ref{sec:prelim}. We remark that we incorporate the low-frequency smooth part $(P_{\leq 4} f_0^\omega, P_{\leq 4} f_1^\omega)$ of the random data into the initial data for the Cauchy problem~\eqref{equ:forced_cubic_random} for the nonlinear component~$v(t)$ only to avoid certain technical complications with the zero frequency in the proof of the large deviation estimate in Proposition~\ref{prop:large_deviation_L2Linfty}.

\medskip 

Thus, we are led to generalize our study of the asymptotic behavior of the solution to~\eqref{equ:forced_cubic_random} to that of the asymptotic behavior of solutions to the forced cubic nonlinear wave equation
\begin{equation} \label{equ:forced_cubic}
 \left\{ \begin{aligned}
          -\partial_t^2 v + \Delta v &= (F+v)^3 \text{ on } \bR \times \bR^4, \\
          (v, \partial_t v)|_{t=0} &= (v_0, v_1) \in \dot{H}^1_x(\bR^4) \times L^2_x(\bR^4)
         \end{aligned} \right.
\end{equation}
for \emph{arbitrary} real-valued initial data $(v_0, v_1)$ in the energy class, and forcing term $F \colon \bR \times \bR^4 \to \bR$ satisfying suitable space-time integrability properties.

\medskip 

The standard local Cauchy theory for the cubic nonlinear wave equation on $\bR^4$ adapts readily to~\eqref{equ:forced_cubic}. In particular, one can show that for initial data $(v_0, v_1) \in \dot{H}^1_x(\bR^4) \times L^2_x(\bR^4)$ and $F \in L^3_{t, loc} L^6_x(\bR \times \bR^4)$, there exists a unique solution 
\begin{equation}
 (v, \partial_t v) \in C \bigl( I_{\ast}; \dot{H}^1_x(\bR^4) \bigr) \cap L_{t, loc}^3 L_x^6(I_{\ast} \times \bR^4) \times C \bigl( I_{\ast}; L^2_x(\bR^4) \bigr)
\end{equation}
to the Cauchy problem~\eqref{equ:forced_cubic} with maximal time interval of existence $I_\ast = (T_-, T_+)$. Furthermore, we have the finite time blowup criterion: 
\begin{equation}
 T_+ < \infty \qquad \Longrightarrow \qquad \|v\|_{L_t^3 L_x^6([0, T_+) \times \bR^4)} = + \infty
\end{equation}
with an analogous statement if $T_- > -\infty$. Finally, if the forcing term satisfies the stronger condition $F \in L^3_t L^6_x(\bR \times \bR^4)$, then a global solution $v(t)$ to~\eqref{equ:forced_cubic} scatters to free waves if $\| v \|_{L^3_t L^6_x(\bR \times \bR^4)} < \infty$. Correspondingly, we refer to the $L_t^3 L_x^6(\bR \times \bR^4)$ norm as a scattering norm.

\medskip 

The following observation reduces the complete analysis of the global and asymptotic behavior of solutions to~\eqref{equ:forced_cubic} to proving uniform-in-time a priori bounds on the energy of these solutions on their maximal time intervals of existence. A fundamental difficulty in the study of the forced cubic nonlinear wave equation, however, is that it is no longer a Hamiltonian equation so there is no conserved energy functional, and obtaining such energy bounds is therefore non-trivial.

\begin{theorem} \label{thm:conditional_scattering}
 There exists a non-decreasing function $K \colon [0,\infty) \to [0, \infty)$ with the following property. Let $(v_0, v_1) \in \dot{H}^1_x(\bR^4) \times L^2_x(\bR^4)$ and $F \in L^3_t L^{6}_x(\bR \times \bR^4)$. Let $v(t)$ be a solution to \eqref{equ:forced_cubic} defined on its maximal time interval of existence $I_\ast$. Suppose in addition that
 \begin{equation} \label{equ:energy_hypothesis}
  M := \sup_{t \in I_\ast} \, E(v(t)) < \infty,
 \end{equation}
 where
 \[
  E(v(t)) = \int_{\bR^4} \frac{1}{2} |\nabla_x v(t)|^2 + \frac{1}{2} |\partial_t v(t)|^2 + \frac{1}{4} |v(t)|^4 \, dx.
 \]
 Then $I_\ast = \bR$, that is $v(t)$ is globally defined, and it holds that
 \begin{equation} \label{equ:conditional_spacetime_bound}
  \|v\|_{L^3_t L^{6}_x(\bR\times\bR^4)} \leq C \|F\|_{L^3_t L^{6}_x(\bR\times\bR^4)} \bigl( K(M) + 1 \bigr) \exp \bigl( C \,K(M)^3 \bigr)
 \end{equation}
 for some absolute constant $C > 0$. In particular, the solution $v(t)$ scatters to free waves as $t \to \pm \infty$ in the sense that there exist initial data $(v_0^{\pm}, v_1^{\pm}) \in \dot{H}^1_x(\bR^4) \times L^2_x(\bR^4)$ such that
 \[
  \lim_{t \to \pm \infty} \, \bigl\| \nabla_{t,x} \bigl( v(t) - v_L^{\pm}(t) \bigr) \bigr\|_{L^2_x(\bR^4)} = 0,
 \]
 where 
 \[
  v_L^{\pm}(t) = \cos(t |\nabla|) v_0^{\pm} + \frac{\sin(t|\nabla|)}{|\nabla|} v_1^{\pm}.
 \]
\end{theorem}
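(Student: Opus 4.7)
The plan is a three-step argument: (i) extract a quantitative Morawetz-type spacetime bound $K(M)$ on a controlling norm of $v$ from the a priori energy hypothesis; (ii) partition $I_\ast$ into finitely many subintervals on which a Strichartz perturbation argument closes; and (iii) iterate the local bounds to obtain the global scattering norm estimate. For step (i), I would test a standard Morawetz--Vitali multiplier (of the form $a(x) \partial_t v + b(x) \nabla v + c(x) v$ adapted to $\bR^4$) against equation \eqref{equ:forced_cubic}. For the unforced cubic equation this produces a good positive quartic spacetime integral (typically of the form $\iint |v|^4/|x| \, dx\, dt$) controlled by boundary terms dominated by $E(v) \leq M$. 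In the forced setting, expanding $(F+v)^3 = v^3 + 3v^2 F + 3v F^2 + F^3$ introduces three error terms, which I would absorb using $F \in L^3_t L^6_x$ together with the uniform energy control $\|v\|_{L^\infty_t (\dot{H}^1_x \cap L^4_x)} \lesssim M^{1/2}$. The outcome is an \emph{approximate} Morawetz estimate bounding a suitable controlling spacetime norm of $v$ by a non-decreasing function $K$ of $M$ alone.

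In step (ii), the approximate Morawetz bound permits a partition of $I_\ast$ into consecutive subintervals $I_1, \ldots, I_J$, with $J$ polynomially bounded in $K(M)$, such that on each $I_j$ both the controlling norm of $v$ and $\|F\|_{L^3_t L^6_x(I_j \times \bR^4)}$ are smaller than a fixed small absolute constant $\eta$. On each subinterval, the Duhamel formula combined with Strichartz estimates and the trilinear expansion of $(F+v)^3$ yields
\[
 \|v\|_{L^3_t L^6_x(I_j \times \bR^4)} \leq C \bigl( M^{\frac{1}{2}} + \|F\|_{L^3_t L^6_x(I_j \times \bR^4)} \bigr) + C \bigl( \|v\|_{L^3_t L^6_x(I_j \times \bR^4)} + \|F\|_{L^3_t L^6_x(I_j \times \bR^4)} \bigr)^3,
\]
and a continuity argument using the smallness of the nonlinear contribution produces the universal linear bound $\|v\|_{L^3_t L^6_x(I_j \times \bR^4)} \leq C (M^{1/2} + \|F\|_{L^3_t L^6_x(I_j \times \bR^4)})$. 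Propagating the energy from the left endpoint of $I_j$ to the right via the forced Duhamel identity produces a bounded multiplicative factor per subinterval, and iterating across the $J \lesssim K(M)^3$ subintervals yields the exponential factor $\exp(C K(M)^3)$ appearing in \eqref{equ:conditional_spacetime_bound}. Finiteness of the scattering norm then rules out finite-time blowup via the standard criterion, so $I_\ast = \bR$, and scattering to free waves follows routinely from the Strichartz bound together with the Duhamel formula.

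The principal obstacle is the approximate Morawetz estimate in step (i). The quadratic error $3 v^2 F$ has the same homogeneity as the beneficial quartic term in the Morawetz identity and cannot be dominated by a trivial H\"older estimate; handling it will require exploiting the precise structure of the Morawetz multiplier (integrating by parts to redistribute derivatives between $v$ and the weight) or invoking a weighted Sobolev-type inequality matched to the Morawetz weight. The cubic error $F^3$ and the mixed term $3 v F^2$ should then be controlled routinely via $\|F\|_{L^3_t L^6_x}^3$ together with the uniform energy bound on $v$.
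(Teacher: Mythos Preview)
Your proposal has a genuine gap in step (ii), and it stems from a misidentification of the role of the Morawetz estimate.

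The continuity argument you describe cannot close for large $M$. In the displayed inequality
\[
 \|v\|_{L^3_t L^6_x(I_j)} \leq C \bigl( M^{1/2} + \|F\|_{L^3_t L^6_x(I_j)} \bigr) + C \bigl( \|v\|_{L^3_t L^6_x(I_j)} + \|F\|_{L^3_t L^6_x(I_j)} \bigr)^3,
\]
the linear contribution $C M^{1/2}$ is large, so the cubic term cannot be absorbed: one would need $C(C M^{1/2})^2 \ll 1$, which fails precisely in the large-energy regime. The Morawetz quantity $\iint |v|^4/|x|$ from step (i) does not appear in this inequality and does not control $\|v\|_{L^3_t L^6_x}$ directly; for the \emph{unforced} energy-critical wave equation, the passage from a Morawetz bound to an $L^3_t L^6_x$ bound is essentially the entire content of the Bahouri--G\'erard theorem and cannot be recovered by a naive Strichartz bootstrap. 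In effect your plan attempts to reprove Bahouri--G\'erard by elementary means, and that is where it fails.

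The paper's proof is structurally different and uses no Morawetz identity at all. The function $K$ in the statement \emph{is} the Bahouri--G\'erard a priori bound for the unforced equation, taken as a black box. One partitions $I_\ast$ into $N \lesssim \|F\|_{L^3_t L^6_x}^3 / \varepsilon^3$ intervals on which $\|F\|_{L^3_t L^6_x(I_j)} = \varepsilon$, and on each $I_j$ compares $v$ with the solution $u^{(j)}$ of the \emph{unforced} cubic NLW having the same data at $t_{j-1}$. Since $E(v(t_{j-1})) \leq M$, Bahouri--G\'erard gives $\|u^{(j)}\|_{L^3_t L^6_x(\bR)} \leq K(M)$; a standard long-time perturbation lemma (with $\varepsilon = \varepsilon_0(K(M)) \sim \exp(-C K(M)^3)$) then yields $\|v - u^{(j)}\|_{L^3_t L^6_x(I_j)} \lesssim 1$, hence $\|v\|_{L^3_t L^6_x(I_j)} \lesssim K(M) + 1$. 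Summing over $N$ gives \eqref{equ:conditional_spacetime_bound}. The approximate Morawetz estimate is used elsewhere in the paper (to verify the energy hypothesis under the additional assumption $|x|^{1/2} F \in L^2_t L^\infty_x$, which is not available here), not in the proof of this theorem.
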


An important ingredient in the proof of Theorem~\ref{thm:conditional_scattering} are the a priori bounds on the global scattering norms of solutions to the defocusing cubic nonlinear wave equation on $\bR^4$ coming from the classical work of Bahouri-G\'erard~\cite{Bahouri_Gerard}. In particular, we exploit the fact that these a priori bounds depend only on the energy of the initial data. Using the divisibility of the $L^3_t L^6_x(\bR \times \bR^4)$ norm of the forcing term $F$, the proof of Theorem~\ref{thm:conditional_scattering} follows from these a priori bounds and an iterative application of a suitable perturbation theory which enables us to compare solutions of the forced equation ~\eqref{equ:forced_cubic} to those of the ``unforced'' cubic nonlinear wave equation on $\bR^4$. 

\begin{remark}
 We emphasize that the hypotheses of Theorem~\ref{thm:conditional_scattering} do not include any assumptions of radial symmetry on the initial data or the forcing term $F$, or any assumptions on the Sobolev regularity for the forcing term $F$. An analogous result holds with an identical proof for the forced quintic nonlinear wave equation on $\bR^3$ up to replacing the $L^3_t L^6_x(\bR \times \bR^4)$ by the $L^5_t L^{10}_x(\bR \times \bR^3)$ norm.
\end{remark}

\begin{remark} \label{rem:get_global_without_scattering}
 If one is only interested in conditional global existence without scattering for the forced cubic nonlinear wave equation~\eqref{equ:forced_cubic}, then the proof of Theorem~\ref{thm:conditional_scattering} implies that it suffices to make the weaker assumption $F \in L^3_{t, loc} L^6_x(\bR \times \bR^4)$. Finally, if one assumes that
 \begin{equation} \label{equ:assumption_on_F_global_without_scattering}
  F \in L^3_{t,loc} L^6_x(\bR \times \bR^4) \cap L^1_{t,loc} L^\infty_x(\bR \times \bR^4),
 \end{equation}
 then one may infer from the Gronwall-type estimate of Burq-Tzvetkov~\cite[Proposition 2.2]{BT4} that the energy of the solution to the forced cubic nonlinear wave equation~\eqref{equ:forced_cubic} cannot blow up in finite time. A straightforward adaptation of the arguments from~\cite{BT4} to this setting shows that for arbitrary real-valued $(f_0, f_1) \in H^s_x(\bR^4) \times H^{s-1}_x(\bR^4)$, $0 < s < 1$, the free wave evolution of the associated random data almost surely satisfies~\eqref{equ:assumption_on_F_global_without_scattering}, thus recovering the almost sure global well-posedness result of Pocovnicu~\cite[Theorem 1.3]{Pocovnicu} for the energy-critical defocusing nonlinear wave equation on $\bR^4$ by taking $F \equiv S(t)(f_0^\omega, f_1^\omega)$.
\end{remark}

\begin{remark} \label{rmk:pocovnicu_compare}
While the proof of Theorem~\ref{thm:conditional_scattering} and the discussion in Remark~\ref{rem:get_global_without_scattering} may be reminiscent of the proof of almost sure global well-posedness (without scattering) for the defocusing cubic nonlinear wave equation on $\bR^4$ by Pocovnicu~\cite[Theorem 1.3]{Pocovnicu}, there are important differences. While Pocovnicu's proof also relies on perturbation theory and the a priori bounds due to Bahouri-G\'erard~\cite{Bahouri_Gerard}, the application of the perturbation theory within the proof of a key ``good'' local well-posedness result~\cite[Proposition 4.3]{Pocovnicu} requires a certain smallness condition on the $L^3_t L^6_x$ norm of the forcing term on short time intervals in terms of the actual length of these time intervals, see (4.5) in \cite{Pocovnicu}. Therefore, the method in~\cite{Pocovnicu} can only yield space-time bounds on arbitrarily long, but finite, time intervals and hence falls short of being an approach to prove scattering. In contrast, in our proof of Theorem~\ref{thm:conditional_scattering}, the requisite smallness condition for the perturbation theory is provided by the standard divisibility of the $L^3_t L^6_x(\bR \times \bR^4)$ norm of the forcing term $F$ and does not depend on its specific structure.
\end{remark}

In light of the previous theorem, in order to establish global existence and scattering for the forced cubic nonlinear wave equation~\eqref{equ:forced_cubic}, we focus on proving the a priori energy bound~\eqref{equ:energy_hypothesis} for solutions of~\eqref{equ:forced_cubic}. In our next theorem, we present some sufficient conditions on the forcing term $F$ in order to establish these uniform-in-time energy bounds.

\begin{theorem} \label{thm:scattering}
 Let $(v_0, v_1) \in \dot{H}^1_x(\bR^4) \times L^2_x(\bR^4)$. Assume that
 \begin{align}\label{equ:driving_bds}
 F \in L^3_t L^{6}_x(\bR \times \bR^4) \qquad \textup{and}  \qquad |x|^{\frac{1}{2}} F \in L^2_t L^{\infty}_x(\bR \times \bR^4).
 \end{align}
 Let $v(t)$ be a solution to \eqref{equ:forced_cubic} defined on its maximal time interval of existence $I_\ast$. Then we have 
 \[
  \sup_{t \in I_\ast} \, E(v(t)) \leq C \exp \Bigl( C \bigl( \|F\|_{L^3_t L^6_x(\bR\times\bR^4)}^3 + \bigl\| |x|^{\frac{1}{2}} F \bigr\|_{L^2_t L^\infty_x(\bR\times\bR^4)}^2 \bigr) \Bigr) ( E(v(0)) + 1 )
 \]
 for some absolute constant $C > 0$. It therefore holds that $I_\ast = \bR$, that is $v(t)$ exists globally in time, and the solution $v(t)$ scatters to free waves as $t \to \pm \infty$.
\end{theorem}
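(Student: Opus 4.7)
By Theorem~\ref{thm:conditional_scattering}, it suffices to establish the uniform energy bound~\eqref{equ:energy_hypothesis} on the maximal interval $I_\ast$. The plan is to couple an energy-growth estimate with an approximate Morawetz estimate and close via a divisibility/bootstrap argument.

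For the energy growth, differentiate along~\eqref{equ:forced_cubic} using $\partial_t^2 v = \Delta v - (F+v)^3$ to get
\[
 \frac{d}{dt} E(v(t)) = - \int_{\bR^4} \partial_t v \bigl( 3 F v^2 + 3 F^2 v + F^3 \bigr) \, dx.
\]
By Cauchy--Schwarz, $\int F^3 \partial_t v \, dx \leq \|F\|_{L^6_x}^3 \|\partial_t v\|_{L^2_x}$, whose time integral is controlled by $\|F\|_{L^3_t L^6_x}^3$. Writing $F^2 v \partial_t v = (|x|^{1/2} F)^2 (v/|x|) \partial_t v$ and invoking Hardy's inequality $\|v/|x|\|_{L^2_x} \lesssim \|\nabla v\|_{L^2_x} \leq E(v)^{1/2}$ yields a bound by $\||x|^{1/2} F\|_{L^\infty_x}^2 E(v)$, whose time integral is controlled by $\||x|^{1/2} F\|_{L^2_t L^\infty_x}^2 \sup_t E(v)$. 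The delicate cross term $F v^2 \partial_t v$ is estimated by
\[
 \Bigl| \int_{\bR^4} F v^2 \partial_t v \, dx \Bigr| \leq \|\partial_t v\|_{L^2_x} \bigl\| |x|^{1/2} F \bigr\|_{L^\infty_x} \Bigl( \int_{\bR^4} \frac{|v|^4}{|x|} \, dx \Bigr)^{1/2},
\]
so that time integration pairs $\||x|^{1/2} F\|_{L^2_t L^\infty_x}$ with the global Morawetz density $\mathcal{Q} := \iint |v|^4/|x| \, dx dt$. This pairing is precisely why the weighted $L^2_t L^\infty_x$ norm in~\eqref{equ:driving_bds} is the natural hypothesis.

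To control $\mathcal{Q}$, I would multiply~\eqref{equ:forced_cubic} by the four-dimensional Morawetz multiplier $Xv = \partial_r v + \frac{3}{2r} v$ (appropriately regularized near $r = 0$, since $v$ need not be radial) and integrate over $[-T,T] \times \bR^4$. The standard multiplier identity yields a positive bulk term proportional to $\mathcal{Q}$, controlled by a time-boundary flux bounded by $\sup_t E(v(t))$ and by forcing errors of schematic form $\iint (F v^2 + F^2 v + F^3) \cdot X v \, dx dt$. Using the pointwise bound $|Xv| \lesssim |\nabla v| + |v|/|x|$, Hardy's inequality, and the same weighted factorizations of $F$ as above, each error is bounded by one of $\sup_t E \cdot \||x|^{1/2} F\|_{L^2_t L^\infty_x}^2$, $\sup_t E^{1/2} \cdot \|F\|_{L^3_t L^6_x}^3$, or $\sup_t E^{1/2} \cdot \||x|^{1/2} F\|_{L^2_t L^\infty_x} \mathcal{Q}^{1/2}$. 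Absorbing the last contribution by AM--GM yields
\[
 \mathcal{Q} \leq C \sup_t E(v(t)) \bigl( 1 + \bigl\| |x|^{1/2} F \bigr\|_{L^2_t L^\infty_x}^2 \bigr) + C \|F\|_{L^3_t L^6_x}^3 \bigl( \sup_t E(v(t)) \bigr)^{1/2}.
\]

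Substituting this bound on $\mathcal{Q}$ back into the energy-growth estimate produces, for $M(T) := \sup_{t \in [-T,T] \cap I_\ast} E(v(t))$, an inequality in which the coefficient of $M(T)$ on the right is a polynomial in $\|F\|_{L^3_t L^6_x}$ and $\||x|^{1/2} F\|_{L^2_t L^\infty_x}$. Exploiting the divisibility of these two driving norms, I partition $\bR$ into $N \lesssim \|F\|_{L^3_t L^6_x}^3 + \||x|^{1/2} F\|_{L^2_t L^\infty_x}^2$ consecutive subintervals on each of which the $F$-dependent coefficient falls below a fixed absorption threshold; a standard continuity argument on each subinterval then gives $\sup_{t \in I_j} E(v(t)) \leq 2 E(v(t_{j-1}))$, and iterating across the subintervals yields the exponential bound asserted in the theorem. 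The main obstacle is the approximate Morawetz estimate itself: because $v$ is non-radial and $Xv$ is singular at the origin, careful regularization (e.g.\ smoothing $1/r$ via $r/(r^2 + \varepsilon^2)$) together with Hardy's inequality is needed to justify the integration by parts, to absorb the lower-order spherical-gradient contributions, and to preserve the structural pairing between the Morawetz density and the weighted $L^2_t L^\infty_x$ norm of $F$ throughout the limit $\varepsilon \to 0$.
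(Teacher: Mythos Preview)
Your proposal is correct and follows essentially the same approach as the paper: couple the energy-growth identity with the approximate Morawetz estimate obtained from the multiplier $\partial_r v + \tfrac{3}{2|x|} v$, then close by divisibility of the two driving norms and a continuity argument. The only cosmetic differences are that the paper absorbs the middle term via Young's inequality $|F^2 v| \lesssim |F|^3 + |F| v^2$ (so only two error types appear) rather than treating it separately through Hardy as you do, and the paper packages the bootstrap using the pair $A(T)=\int_0^T |\partial_t E|\,dt$, $B(T)=\iint |v|^4/|x|$ rather than working directly with $M(T)=\sup_t E$; both routes close identically on small-$F$ subintervals.
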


The main idea of the proof of Theorem~\ref{thm:scattering} is to combine an approximate Morawetz identity for the forced cubic nonlinear wave equation~\eqref{equ:forced_cubic} with a Gronwall-type estimate due to Burq-Tzvetkov~\cite[Proposition 2.2]{BT4} in order to achieve uniform-in-time control on the energy of solutions to~\eqref{equ:forced_cubic}.

\begin{remark}
 Under the assumption that the forcing term satisfies
 \[
  F \in L^3_t L^6_x(\bR\times\bR^4) \cap L^1_t L^\infty_x(\bR\times\bR^4),
 \] 
 the Gronwall-type argument due to Burq-Tzvetkov~\cite[Proposition 2.2]{BT4} would imply uniform-in-time bounds on the energy of solutions to~\eqref{equ:forced_cubic}. However, proving global-in-time $L^1_t L^\infty_x(\bR \times \bR^4)$ bounds for the free wave evolution of randomized initial data seems to be out of reach.
\end{remark}

Finally, we are in a position to state our main almost sure scattering result for the energy-critical defocusing nonlinear wave equation on $\bR^4$.

\begin{theorem}\label{thm:random_scattering}
Let $\frac{1}{2} < s < 1$. For real-valued radially symmetric $(f_0, f_1) \in H^s_x(\bR^4) \times H^{s-1}_x(\bR^4)$, let $(f_0^\omega, f_1^\omega)$ be the randomized initial data defined in \eqref{equ:bighsrandomization}. Then for almost every $\omega \in \Omega$, there exists a unique global solution 
 \begin{equation} \label{equ:solution_main_theorem}
  (u, \partial_t u) \in \bigl( S(t)(f_0^\omega, f_1^\omega), \partial_t S(t)(f_0^\omega, f_1^\omega) \bigr) + C\bigl(\bR; \dot{H}^1_x(\bR^4) \times L^2_x(\bR^4)\bigr)
 \end{equation}
 to the energy-critical defocusing nonlinear wave equation
 \begin{equation} \label{equ:nlw_main_theorem}
  \left\{ \begin{aligned}
   -\partial_t^2 u + \Delta u &= u^3 \text{ on } \bR \times \bR^4, \\
   (u, \partial_t u)|_{t=0} &= (f_0^\omega, f_1^\omega),
  \end{aligned} \right.
 \end{equation}
 which scatters to free waves as $t \to \pm \infty$ in the sense that there exist data $(v_0^{\pm}, v_1^{\pm}) \in \dot{H}^1_x(\bR^4) \times L^2_x(\bR^4)$ such that 
 \[
  \lim_{t \to \pm \infty} \, \bigl\| \nabla_{t,x} \bigl( u(t) - S(t)(f_0^\omega, f_1^\omega) - v_L^{\pm}(t) \bigr) \bigr\|_{L^2_x(\bR^4)} = 0,
 \]
 where
 \[
  v_L^{\pm}(t) = \cos(t |\nabla|) v_0 + \frac{\sin(t|\nabla|)}{|\nabla|} v_1.
 \]
\end{theorem}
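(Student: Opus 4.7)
The plan is to decompose the solution as $u(t) = S(t)(P_{>4}f_0^\omega, P_{>4}f_1^\omega) + v(t)$ so that $v$ satisfies the forced cubic nonlinear wave equation~\eqref{equ:forced_cubic} with forcing term $F = S(t)(P_{>4}f_0^\omega, P_{>4}f_1^\omega)$ and initial data $(P_{\leq 4}f_0^\omega, P_{\leq 4}f_1^\omega)$, and then to apply Theorem~\ref{thm:scattering} to $v$. Since $P_{\leq 4}$ is frequency-localized, Bernstein's inequality upgrades the a.s.\ $H^s_x \times H^{s-1}_x$ regularity of $(f_0^\omega, f_1^\omega)$ to arbitrary regularity for $(P_{\leq 4}f_0^\omega, P_{\leq 4}f_1^\omega)$, so the initial data for $v$ lies in $\dot H^1_x \times L^2_x$ almost surely.

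The main task is thus to verify that almost surely the forcing term $F$ satisfies the hypotheses~\eqref{equ:driving_bds} of Theorem~\ref{thm:scattering}, namely $F \in L^3_t L^6_x(\bR\times\bR^4)$ and $|x|^{1/2}F \in L^2_t L^\infty_x(\bR\times\bR^4)$. For the first condition I would invoke the global-in-time large deviation estimate for the $L^3_t L^6_x$ norm of the free evolution of randomized data (Proposition~\ref{prop:large_deviation_L3L6}): Gaussian tail bounds combined with Khintchine-type inequalities translate $H^s_x \times H^{s-1}_x$ control of $(f_0, f_1)$ into strong a.s.\ $L^3_t L^6_x$ control of $F$. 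This step does not require radial symmetry and relies only on the improved integrability that the unit-scale randomization provides on Euclidean space.

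The hard part, and the main obstacle, is to control $|x|^{1/2}F$ in $L^2_t L^\infty_x$ globally in time; this is precisely where the radial symmetry assumption on $(f_0, f_1)$ enters through Proposition~\ref{prop:large_deviation_L2Linfty}. The strategy is first to reduce, via Gaussian concentration, to a deterministic bound on the square function $\bigl(\sum_{k\in\bZ^4} |P_k(\cdot)|^2\bigr)^{1/2}$ of the free evolution, and then to apply the radial Sobolev-type inequality~\eqref{equ:radialish_sobolev_intro} together with weighted Strichartz-type estimates to convert $H^s_x$ regularity of the data (with $s>\tfrac12$) into the $|x|^{1/2}L^\infty_x$ bound that is square-integrable in time. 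The threshold $s > \tfrac12$ enters naturally here: the weight $|x|^{1/2}$ costs essentially half a derivative in the radial Sobolev estimate, and the $L^2_t$ integrability is exactly what is compatible with the dispersive decay of the free wave evolution of radial data in four dimensions.

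Once the hypotheses of Theorem~\ref{thm:scattering} hold on an event of full probability, the theorem produces an almost surely global solution $v(t)$ that scatters to free waves $v_L^{\pm}(t)$ as $t \to \pm\infty$. Writing $u(t) - S(t)(f_0^\omega, f_1^\omega) = v(t) - S(t)(P_{\leq 4}f_0^\omega, P_{\leq 4}f_1^\omega)$ and absorbing the free wave $S(t)(P_{\leq 4}f_0^\omega, P_{\leq 4}f_1^\omega)$ into the asymptotic profile yields the scattering statement in the form claimed. Uniqueness in the class~\eqref{equ:solution_main_theorem} follows from the deterministic local Cauchy theory for the forced equation~\eqref{equ:forced_cubic} in the energy class.
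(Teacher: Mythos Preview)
Your proposal is correct and follows exactly the paper's approach: decompose $u = S(t)(P_{>4}f_0^\omega,P_{>4}f_1^\omega)+v$, invoke Propositions~\ref{prop:large_deviation_L3L6} and~\ref{prop:large_deviation_L2Linfty} to verify the hypotheses~\eqref{equ:driving_bds} on $F$ almost surely, and then apply Theorem~\ref{thm:scattering}. Your added remarks on absorbing $S(t)(P_{\leq 4}f_0^\omega,P_{\leq 4}f_1^\omega)$ into the scattering profile and on uniqueness via the deterministic local theory are correct and make explicit points the paper leaves implicit.
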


\begin{remark} \label{rem:uniqueness}
In the statement of Theorem~\ref{thm:random_scattering}, uniqueness holds in the sense that upon writing 
 \[
  (u, \partial_t u) = \bigl( S(t)( P_{>4} f_0^\omega, P_{>4} f_1^\omega), \partial_t S(t)( P_{>4} f_0^\omega, P_{>4} f_1^\omega) \bigr) + (v, \partial_t v),
 \]
 there exists a unique global solution 
 \[
  (v, \partial_t v) \in C \bigl(\bR; \dot{H}^1_x(\bR^3)\bigr) \cap L^{3}_{t, loc} L^{6}_x(\bR\times\bR^4) \times C\bigl(\bR; L^2_x(\bR^4)\bigr)
 \]
 to the forced cubic nonlinear wave equation
 \begin{equation} 
  \left\{ \begin{aligned}
   -\partial_t^2 v + \Delta v &= \bigl( S(t)(P_{>4}  f_0^\omega, P_{>4} f_1^\omega) + v \bigr)^{3} \text{ on } \bR \times \bR^4, \\
   (v, \partial_t v)|_{t=0} &=  (P_{\leq 4}  f_0^\omega, P_{\leq 4} f_1^\omega),
  \end{aligned} \right.
 \end{equation}
 where once again, the projections $P_{\leq 4}$ and $P_{> 4}$ are the usual dyadic Littlewood-Paley projections, defined in Section \ref{sec:prelim}.
\end{remark}

In light of Theorem~\ref{thm:scattering} and our earlier discussion about the difference equation~\eqref{equ:forced_cubic_random}, the proof of Theorem~\ref{thm:random_scattering} reduces to showing that the forcing term $F = S(t)(P_{>4} f_0^\omega, P_{>4} f_1^\omega)$ satisfies the space-time bounds~\eqref{equ:driving_bds} almost surely. Here the main challenge is to establish that almost surely,
\[
 \bigl\| |x|^{\frac{1}{2}} S(t)(P_{>4} f_0^\omega, P_{>4} f_1^\omega) \bigr\|_{L^2_t L^\infty_x(\bR \times \bR^4)} < \infty.
\]
The proof of this bound follows from a large deviation estimate which hinges on the radial symmetry of the pair $(f_0, f_1)$, see Proposition~\ref{prop:large_deviation_L2Linfty}. In contrast, no radial symmetry assumptions on $(f_0, f_1)$ are required to show that almost surely
\[
 \bigl\| S(t)(P_{>4} f_0^\omega, P_{>4} f_1^\omega) \bigr\|_{L^3_t L^6_x(\bR \times \bR^4)} < \infty,
\]
which follows from methods already implemented by the second and third authors~\cite[Theorem~1.6]{LM}. We improve upon these estimates in the present work through the use of the Klainerman-Tataru refinement of Strichartz estimates, see Proposition~\ref{prop:large_deviation_L3L6}.

\medskip 

As pointed out earlier, although the pair $(f_0, f_1)$ is radially symmetric, the resulting randomized data $(f_0^\omega, f_1^\omega)$ is not radially symmetric. One may be inclined to use a different type of randomization that preserves the radial symmetry of the pair $(f_0, f_1)$ upon randomization. However, our ultimate goal is to obtain the result of Theorem~\ref{thm:random_scattering} for unit-scale randomized non-radial initial data of super-critical regularity, and so we use the unit-scale randomization procedure~\eqref{equ:bighsrandomization}, which has proven to be an effective way to randomize initial data on Euclidean space.

\medskip

Theorem~\ref{thm:random_scattering} is the first almost sure scattering result for an energy-critical problem with scaling super-critical initial data. We note that for energy sub-critical nonlinear wave equations on~$\bR^3$, almost sure scattering results were obtained by de~Suzzoni~\cite{Suzzoni1, Suzzoni2}. In these works almost sure global well-posedness is first established for related energy sub-critical nonlinear wave equations posed on the three-dimensional unit sphere following the methods of Burq-Tzvetkov~\cite{BT1, BT2, BT4}. Using the Penrose transform, these solutions are then mapped to scattering solutions of energy sub-critical nonlinear wave equations on $\bR^3$ with super-critical random initial data belonging to weighted Sobolev spaces.

\subsection*{Notation} We denote by $C > 0$ an absolute constant that depends only on fixed parameters and whose value may change from line to line. We write $X \lesssim Y$ if $X \leq C Y$ and we use the notation $X \sim Y$ to indicate that $X \lesssim Y \lesssim X$. Moreover, we write $X \ll Y$ if the implicit constant should be regarded as small.

\subsection*{Acknowledgements} The authors are grateful to Carlos Kenig, Joachim Krieger, Chris Sogge, and Gigliola Staffilani for useful discussions.

\section{Deterministic preliminaries} \label{sec:prelim}

\subsection{Harmonic analysis}

We begin by introducing the usual dyadic Littlewood-Paley projections. Let $\varphi \in C_c^\infty(\bR^4)$ be a radial smooth bump function satisfying $\varphi(\xi) = 1$ for $|\xi| \leq 1$ and $\varphi(\xi) = 0$ for $|\xi| > 2$. For $N \in 2^{\bZ}$ we define 
\begin{align*}
 \widehat{P_{\leq N} f}(\xi) &:= \varphi(\xi/N) \hat{f}(\xi), \\
 \widehat{P_{> N} f}(\xi) &:= \bigl( 1 - \varphi(\xi/N) \bigr) \hat{f}(\xi), \\
 \widehat{P_N f}(\xi) &:= \bigl( \varphi(\xi/N) - \varphi(2\xi/N) \bigr) \hat{f}(\xi).
\end{align*}
Moreover, we introduce the fattened Littlewood-Paley projection $\widetilde{P}_N := P_{\leq 8 N} - P_{\leq N/8}$. These projections satisfy the usual Bernstein estimates.
\begin{lemma} 
 For $1 \leq p \leq q \leq \infty$ and $s \geq 0$, it holds that
 \begin{align*}
  \bigl\| P_N f \bigr\|_{L^q_x(\bR^4)} &\lesssim N^{\frac{4}{p} - \frac{4}{q}} \bigl\| P_N f \bigr\|_{L^p_x(\bR^4)}, \\
  \bigl\| P_{\leq N} f \bigr\|_{L^q_x(\bR^4)} &\lesssim N^{\frac{4}{p} - \frac{4}{q}} \bigl\| P_{\leq N} f \bigr\|_{L^p_x(\bR^4)}, \\
  \bigl\| |\nabla|^{\pm s} P_N f \bigr\|_{L^p_x(\bR^4)} &\sim N^{\pm s} \bigl\| P_N f \bigr\|_{L^p_x(\bR^4)}, \\
  \bigl\| \langle \nabla \rangle^s P_N f \bigr\|_{L^p_x(\bR^4)} &\lesssim (1 + N^s) \bigl\| P_N f \bigr\|_{L^p_x(\bR^4)}.
 \end{align*}
\end{lemma}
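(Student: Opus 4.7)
The plan is to derive all four estimates from a single principle: up to a harmless fattened multiplier, each projector acts by convolution with a Schwartz kernel that is $L^1$-normalized after rescaling to frequency scale $N$, and then Young's convolution inequality delivers the $L^p \to L^q$ bounds with the stated power of $N$.

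First I would set up the reproducing formulas. Since $\varphi(\xi/N) - \varphi(2\xi/N)$ is supported in an annulus $|\xi|\sim N$ where the fatter multiplier $\tilde\psi(\xi/N) := \varphi(\xi/(8N))-\varphi(8\xi/N)$ equals $1$, one has $P_N f = \tilde P_N P_N f$, and similarly $P_{\leq N} f = m(\nabla/N) P_{\leq N} f$ for a compactly supported $m \in C_c^\infty(\bR^4)$ identically $1$ on the ball of radius $2$. In both cases the convolution kernel is of the form $N^4 K(N\,\cdot\,)$ with $K \in \mathcal{S}(\bR^4)$, so $\|N^4 K(N\,\cdot\,)\|_{L^r_x} = N^{4(1-1/r)} \|K\|_{L^r_x}$ for every $1 \leq r \leq \infty$.

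For the first two inequalities, pick $1 \leq r \leq \infty$ with $1 + \tfrac{1}{q} = \tfrac{1}{p} + \tfrac{1}{r}$ (which exists precisely because $p \leq q$) and apply Young's convolution inequality to obtain
\[
 \|P_N f\|_{L^q_x} \lesssim \|N^4 K(N\,\cdot\,)\|_{L^r_x} \|P_N f\|_{L^p_x} \lesssim N^{4/p - 4/q} \|P_N f\|_{L^p_x},
\]
and identically for $P_{\leq N}$. For the homogeneous derivative estimate, I would write $|\nabla|^{\pm s} P_N f = m_{\pm s}(\nabla/N) P_N f$ where $m_{\pm s}(\eta) := N^{\pm s} |\eta|^{\pm s} \tilde\psi(\eta)$. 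Because $\tilde\psi$ is supported in an annulus bounded away from the origin, the function $|\eta|^{\pm s} \tilde\psi(\eta)$ is Schwartz for every $s \geq 0$, so its inverse Fourier transform lies in $L^1_x$ with norm independent of $N$. The rescaling yields a kernel with $L^1$ norm $\lesssim N^{\pm s}$, and Young's inequality with $r=1$ gives $\||\nabla|^{\pm s} P_N f\|_{L^p_x} \lesssim N^{\pm s} \|P_N f\|_{L^p_x}$. The reverse inequality (needed for the $\sim$) follows by applying the same argument to $|\nabla|^{\mp s}(|\nabla|^{\pm s} P_N f)$ using another fattened Littlewood-Paley projection. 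Finally, the inhomogeneous estimate follows from the pointwise bound $\langle \xi \rangle^s \leq C(1 + |\xi|^s)$ combined with the previous estimate and the trivial $L^p$ boundedness of $P_N$.

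There is no real obstacle here: the only point requiring a bit of care is that the factor $|\eta|^{-s}$ must be multiplied by a cutoff that vanishes near the origin, which is why the homogeneous derivative estimate is stated for $P_N$ rather than $P_{\leq N}$. Otherwise the argument is the standard kernel bound plus Young's inequality, and the constants depend only on $(p,q,s)$ and finitely many Schwartz seminorms of $\varphi$.
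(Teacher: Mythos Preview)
The paper states this lemma without proof (it is the standard Bernstein inequality), so there is nothing to compare against; your argument via rescaled Schwartz kernels and Young's convolution inequality is exactly the standard proof and is correct.

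One small point of care in the last estimate: the pointwise inequality $\langle \xi \rangle^s \leq C(1+|\xi|^s)$ on the symbol does not by itself yield an $L^p$ operator bound, since pointwise domination of multipliers does not imply $L^p$ boundedness for $p\neq 2$. The clean way to finish is to stay within your kernel framework: on the support of the $P_N$ multiplier one has $\langle \xi \rangle \sim 1$ when $N\leq 1$ and $\langle \xi \rangle \sim |\xi|$ when $N\geq 1$, so the rescaled symbol $\langle N\eta\rangle^s\,\tilde\psi(\eta)$ is Schwartz with seminorms bounded by a constant times $(1+N^s)$, and Young's inequality with $r=1$ gives the claim exactly as in your treatment of $|\nabla|^{\pm s}$.
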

We denote dyadic Littlewood-Paley projections $P_N$ using upper-case letters for the frequency, while we denote the unit-scale projections of \eqref{equ:unit_scale_proj} by $P_k$.

\medskip 

For the unit-scale projections, we have the following square-function estimate which should be compared with the usual radial Sobolev embedding in $\bR^4$. 
\begin{lemma} \label{lem:radialish_sobolev}
 Let $s > 0$ and let $f \in H^s_x(\bR^4)$ be radially symmetric. Then for $P_k$ as defined in \eqref{equ:unit_scale_proj}, it holds that
 \begin{align}
  \biggl\| |x|^{\frac{3}{2}} \Bigl( \sum_{k \in \bZ^4} \bigl| P_k f(x) \bigr|^2 \Bigr)^{\frac{1}{2}} \biggr\|_{L^\infty_x(\bR^4)} \lesssim \| f \|_{H^s_x(\bR^4)}.
 \end{align}
\end{lemma}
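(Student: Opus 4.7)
The plan is to combine a dyadic Littlewood--Paley decomposition of $f$ with a wave-packet analysis of the unit-scale projections $P_k$. Since $f$ and the dyadic multipliers are radial, each $P_N f$ is radial; and for $k \in \bZ^4$ with $|k|\sim N$ we have $P_k f = P_k \widetilde P_N f$. The estimate therefore reduces to controlling, for each dyadic $N \geq 1$ and each radial $g_N := \widetilde P_N f$ with $\hat g_N(\xi) = G_N(|\xi|)$, the square function $\sum_{|k|\sim N}|P_k g_N(x)|^2$ pointwise in $x$.

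The heart of the proof is the estimate
\[
 \sum_{|k|\sim N} |P_k g_N(x)|^2 \lesssim |x|^{-3} \|g_N\|_{L^2_x(\bR^4)}^2 \qquad \text{for } |x|\gtrsim 1.
\]
To establish it, I change variables $\xi = k+\eta$ to write
\[
 P_k g_N(x) = e^{ix\cdot k}\int \psi(\eta)\, G_N(|k+\eta|)\, e^{ix\cdot\eta}\,d\eta,
\]
and integrate by parts $M$ times in the three-dimensional variable $\eta_\perp$ orthogonal to $\omega_k := k/|k|$. Each $\eta_\perp$-derivative that lands on $G_N(|k+\eta|)$ contributes a factor of order $|k|^{-1}$, since $|k+\eta|$ depends on $\eta_\perp$ only through $|\eta_\perp|^2/(2|k+\eta|)$; the remaining derivatives fall on $\psi$ and are harmless. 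This yields the wave-packet bound
\[
 |P_k g_N(x)|^2 \lesssim \min\bigl(1,(|x|\sin\theta_k)^{-2M}\bigr) \int_{|k|-1}^{|k|+1}|G_N(r)|^2\,dr,
\]
where $\theta_k$ is the angle between $x$ and $k$. For each $r\sim N$, I then approximate the angular sum $\sum_{k \in \bZ^4,\,|k|\in[r,r+1]}\min\bigl(1,(|x|\sin\theta_k)^{-2M}\bigr)$ by a spherical integral; splitting into the spherical cap of angular width $|x|^{-1}$ (volume $\sim |x|^{-3}$) and its rapidly decaying complement produces an angular density of order $r^3/|x|^3$. Combined with the radial coarea identity $\int_{N}^{2N} r^3 |G_N(r)|^2\,dr\sim \|g_N\|_{L^2_x}^2$, this gives the heart estimate.

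Summing over dyadic $N$ using the finite overlap of the $\widetilde P_N$ projections yields $\sum_k|P_k f(x)|^2 \lesssim |x|^{-3}\|f\|_{L^2_x}^2 \leq |x|^{-3}\|f\|_{H^s_x}^2$ in the regime $|x|\gtrsim 1$. In the regime $|x|\lesssim 1$ I instead use the uniform bound $\sum_k|P_k f(x)|^2 \lesssim \|f\|_{L^2_x}^2$ (which follows from unit-scale Bernstein together with the almost-orthogonality $\sum_k\|P_k f\|_{L^2_x}^2\lesssim \|f\|_{L^2_x}^2$) and absorb the factor $|x|^3\leq 1$. The low-frequency contribution coming from $k$ with $|k|\lesssim 1$ (where the wave-packet picture degenerates, since $\omega_k$ is ill-defined) is handled separately by applying the classical radial Sobolev estimate $|x|^{3/2}|P_{\leq 1} f(x)| \lesssim \|P_{\leq 1}f\|_{L^2_x}^{1/2}\|P_{\leq 1}f\|_{\dot H^1_x}^{1/2} \lesssim \|f\|_{L^2_x}$ directly to the radial function $P_{\leq 1}f$, combined with the identity $P_k = P_k P_{\leq 1}$ for bounded $|k|$.

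The main obstacle is that the integration-by-parts argument above requires differentiability of $G_N$, which is not guaranteed for generic $f \in H^s$ with small $s$. I would handle this via a density argument: establish the pointwise estimate first for radial Schwartz $f$ (where all derivatives exist), then extend to $f \in H^s_x(\bR^4)$ radial by continuity, using that the map $f \mapsto \bigl(\sum_k|P_k f|^2\bigr)^{1/2}$ is bounded from $L^2_x$ into $L^\infty_x$ uniformly in $x$, so that $L^2_x$-convergence of a regularizing sequence transfers to uniform convergence of the square functions.
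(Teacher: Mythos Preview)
Your wave-packet bound
\[
 |P_k g_N(x)|^2 \lesssim \min\bigl(1,(|x|\sin\theta_k)^{-2M}\bigr) \int_{|k|-1}^{|k|+1}|G_N(r)|^2\,dr
\]
is not justified by the integration by parts you describe, and this is where the argument breaks. When you differentiate $G_N(|k+\eta|)$ in $\eta_\perp$ you do pick up the geometric factor $\eta_\perp/|k+\eta|=O(|k|^{-1})$, but you also pick up $G_N'(|k+\eta|)$; after $M$ integrations by parts the amplitude contains $G_N^{(j)}$ for $1\le j\le M$. Since $G_N(\rho)=\widetilde\varphi_N(\rho)F(\rho)$ with $F=\hat f$, these derivatives involve $F',F'',\dots$, and on the Fourier side $\partial_\rho^{\,j}\hat f$ corresponds to $|x|^j f$. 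Thus the terms you have suppressed are controlled by $\||x|^j f\|_{L^2_x}$ rather than by $\|f\|_{H^s_x}$. For radial Schwartz $f$ the bound you obtain therefore carries a constant depending on weighted norms of $f$, not on $\|f\|_{H^s_x}$; your density argument cannot remove this, because it requires the inequality for Schwartz data to hold with a constant uniform in $\|f\|_{H^s_x}$. (Concretely: take $F(\rho)=\chi(\rho/N)\cos(A\rho)$ with $A\gg N$; then $\|f\|_{L^2}$ is fixed while the $j=1$ contribution blows up with $A$.)

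The underlying issue is the choice of coordinates. By writing $\xi=k+\eta$ you have mixed the radial and angular frequency directions, so the radial profile $G_N(|\xi|)$ acquires $\eta_\perp$-dependence. The paper avoids this by working in spherical coordinates $\xi=(\rho,\theta,\omega)$ with the pole aligned with $x$: since $f$ is radial, $\hat f$ depends only on $\rho$, and integrating by parts in the angular variable $\theta$ lands derivatives only on the smooth bump $\psi_k\bigl(\xi(\rho,\theta,\omega)\bigr)$, never on $\hat f(\rho)$. One integration by parts in $\theta$ gives a gain of $(|x|\rho)^{-1}$; the remaining half power of decay in $|x|$ comes from a van der Corput estimate on the resulting oscillatory $\theta$-integral (the phase $|x|\rho\cos\theta$ has nondegenerate stationary points at $\theta=0,\pi$). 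Your angular summation heuristic is essentially correct and reappears in the paper's final step, but the pointwise input it feeds on has to be obtained without differentiating $\hat f$.
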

\begin{proof}
It suffices to consider the more delicate case of summing over all lattice points $k \in \bZ^4$ with $|k| \gg 1$. Fix $x \in \bR^4$. We may assume that $|x| \gg 1$ and that we have chosen coordinates so that 
\[
 x = (0, 0, 0, |x|) = |x| e_4
\]
with $e_4 = (0,0,0,1)$. Then it holds that
\begin{align*}
 P_k f(x) &= \int_{\bR^4_{\xi}} e^{i x \cdot \xi} \psi_k(\xi) \hat{f}(\xi) \, d\xi \\
 &= \int_0^{\infty} \int_0^{\pi} \int_{\bS^2} e^{i |x| \rho \cos(\theta)} \psi_k\bigl( \xi(\rho, \theta, \omega) \bigr) \hat{f}(\rho) \rho^3 \sin^2(\theta) \, d\omega \, d\theta \, d\rho \\
 &= \int_0^{\infty} \biggl( \int_0^{\pi} \int_{\bS^2} e^{i |x| \rho \cos(\theta)} \sin^2(\theta) \psi_k\bigl( \xi(\rho, \theta, \omega) \bigr) d\omega \, d\theta \biggr) \hat{f}(\rho) \rho^3 \, d\rho,
\end{align*}
where we are using spherical coordinates 
 \[
  0 \leq \rho < \infty, \quad 0 \leq \theta \leq \pi, \quad \omega \in \bS^2 \hookrightarrow \bR^3
 \]
 such that
 \[
  \xi = (\xi_1, \xi_2, \xi_3, \xi_4) = ( \rho \omega \sin(\theta), \rho \cos(\theta) ).
 \]
Integrating by parts in $\theta$ we find 
 \begin{equation} \label{equ:int_by_parts_once}
  \begin{aligned}
   &\int_0^{\pi} \int_{\bS^2} e^{i |x| \rho \cos(\theta)} \sin^2(\theta) \psi_k\bigl( \xi(\rho, \theta, \omega) \bigr) d\omega \, d\theta \\
   &= \int_0^{\pi} \int_{\bS^2} \frac{1}{- i |x| \rho \sin(\theta)} \frac{\partial}{\partial \theta} \Bigl( e^{i |x| \rho \cos(\theta)} \Bigr) \sin^2(\theta) \psi_k\bigl( \xi(\rho, \theta, \omega) \bigr) \, d\omega \, d\theta \\
   &= \frac{1}{i |x| \rho} \int_0^{\pi} \int_{\bS^2} e^{i |x| \rho \cos(\theta)} \cos(\theta) \psi_k\bigl( \xi(\rho, \theta, \omega) \bigr) \, d\omega \, d\theta \\
   &\quad + \frac{1}{i |x| \rho} \int_0^{\pi} \int_{\bS^2} e^{i |x| \rho \cos(\theta)} \sin(\theta) \frac{\partial}{\partial \theta} \Bigl( \psi_k\bigl( \xi(\rho, \theta, \omega) \bigr) \Bigr) \, d\omega \, d\theta \\
   &\equiv I + II.
  \end{aligned}
 \end{equation}
Next we observe that since $|k| \gg 1$, the bump function $\psi_k\bigl( \xi(\rho, \theta, \omega) \bigr)$ as well as its derivatives $(\nabla_{\xi}^j \psi_k)\bigl( \xi(\rho, \theta, \omega) \bigr)$, $j = 1, 2$, enforce a localization of the variable $\rho$ to an interval of size $\sim 1$ around~$|k|$, a localization of the variable $\theta$ to an interval of size $\sim \frac{1}{|k|}$ around an angle $\theta_k$ with 
 \[
  \sin(\theta_k) = \sqrt{1 - (e_4 \cdot \textstyle{\frac{k}{|k|})^2}},
 \]
 and a localization of the variable $\omega$ to a cap $\kappa(k) \subset \bS^2$ of surface area $|\kappa(k)| \lesssim \min \{ 1, \frac{1}{|k|^2 \sin^2(\theta_k)} \}$. Moreover, we have the following derivative bounds
 \begin{align}\label{deriv_bd_1}
  \Bigl| \frac{\partial}{\partial \theta} \Bigl( \psi_k\bigl( \xi(\rho, \theta, \omega) \bigr) \Bigr) \Bigr| \lesssim \rho \bigl| ( \nabla_\xi \psi_k ) \bigl( \xi(\rho, \theta, \omega) \bigr) \bigr|
 \end{align}
 and 
  \begin{align}\label{deriv_bd_2}
  \Bigl| \frac{\partial^2}{\partial \theta^2} \Bigl( \psi_k\bigl( \xi(\rho, \theta, \omega) \bigr) \Bigr) \Bigr| \lesssim \rho^2 \Bigl( \bigl| ( \nabla_\xi \psi_k ) \bigl( \xi(\rho, \theta, \omega) \bigr) \bigr| + \bigl| ( \nabla_\xi^2 \psi_k ) \bigl( \xi(\rho, \theta, \omega) \bigr) \bigr| \Bigr).
 \end{align}
 
We now return to estimating the terms $I$ and $II$. We begin with the term $I$. Here, a careful application of the van der Corput lemma together with the above observations reveals that 
\begin{equation} \label{equ:term1_bound}
 \bigl| I \bigr| \lesssim \bigl( |x| |k| \bigr)^{-\frac{3}{2}} \min \Bigl\{ 1, \frac{1}{|k|^2 \sin^2(\theta_k)} \Bigr\} \chi_{[|k|-1, |k|+1]}(\rho),
\end{equation}
where $\chi_{[|k|-1, |k|+1]}(\cdot)$ denotes a sharp cut-off to the interval $[|k|-1, |k|+1]$.
 
Next we treat the term $II$. On the one hand, we have from \eqref{deriv_bd_1} that
\begin{equation} \label{equ:term2_first_bound}
 \bigl| II \bigr| \lesssim \frac{1}{|x|} \frac{1}{|k|} \sin(\theta_k) \min \Bigl\{ 1, \frac{1}{|k|^2 \sin^2(\theta_k)} \Bigr\} \chi_{[|k|-1, |k|+1]}(\rho).
\end{equation}
On the other hand, integrating by parts in $\theta$ once more in the term $II$ and using \eqref{deriv_bd_2}, we find 
\begin{equation} \label{equ:term2_second_bound}
 \bigl| II \bigr| \lesssim \frac{1}{|x|^2} \frac{1}{|k|} \min \Bigl\{ 1, \frac{1}{|k|^2 \sin^2(\theta_k)} \Bigr\} \chi_{[|k|-1, |k|+1]}(\rho). 
\end{equation}
Interpolating between \eqref{equ:term2_first_bound} and \eqref{equ:term2_second_bound} yields
\begin{equation} \label{equ:term2_bound}
 \bigl| II \bigr| \lesssim \frac{1}{|x|^{\frac{3}{2}}} \frac{1}{|k|} \sin^{\frac{1}{2}}(\theta_k) \min \Bigl\{ 1, \frac{1}{|k|^2 \sin^2(\theta_k)} \Bigr\} \chi_{[|k|-1, |k|+1]}(\rho).
\end{equation}
Thus, combining the bounds \eqref{equ:term1_bound} and \eqref{equ:term2_bound} we obtain 
\begin{align*}
 &\biggl| \int_0^{\pi} \int_{\bS^2} e^{i |x| \rho \cos(\theta)} \sin^2(\theta) \psi_k\bigl( \xi(\rho, \theta, \omega) \bigr) d\omega \, d\theta \biggr| \\
 &\quad \quad \quad \lesssim \frac{1}{|x|^{\frac{3}{2}}} \frac{1}{|k|} \Bigl( \frac{1}{|k|^{\frac{1}{2}}} + \sin^{\frac{1}{2}}(\theta_k) \Bigr) \min \Bigl\{ 1, \frac{1}{|k|^2 \sin^2(\theta_k)} \Bigr\} \chi_{[|k|-1, |k|+1]}(\rho).
\end{align*}
Returning to our estimate for $P_k f(x)$, we conclude
\begin{align*}
 \bigl| P_k f(x) \bigr| \lesssim \frac{1}{|x|^{\frac{3}{2}}} \frac{1}{|k|} \Bigl( \frac{1}{|k|^{\frac{1}{2}}} + \sin^{\frac{1}{2}}(\theta_k) \Bigr) \min \Bigl\{ 1, \frac{1}{|k|^2 \sin^2(\theta_k)} \Bigr\}  \bigl\| \chi_{[|k|-1, |k|+1]}(\rho) \hat{f}(\rho) \rho^3  \bigr\|_{L^1_{\rho}}.
 \end{align*}
By H\"older's inequality this is bounded by
\begin{align*}
 \bigl| P_k f(x) \bigr| \lesssim \frac{1}{|x|^{\frac{3}{2}}} \Bigl( 1 + |k|^{\frac{1}{2}} \sin^{\frac{1}{2}}(\theta_k) \Bigr) \min \Bigl\{ 1, \frac{1}{|k|^2 \sin^2(\theta_k)} \Bigr\}  \bigl\| \chi_{[|k|-1, |k|+1]}(\rho) \hat{f}(\rho) \rho^{\frac{3}{2}} \bigr\|_{L^2_{\rho}},
\end{align*}
and thus, we arrive at the desired bound 
\begin{align*}
 |x|^3 \sum_{\substack{ k \in \bZ^4, \\ |k| \gg 1 }} \bigl| P_k f(x) \bigr|^2 &\lesssim \sum_{\substack{ k \in \bZ^4, \\ |k| \gg 1 }} \bigl( 1 + |k| \sin ( \theta_k ) \bigr) \min \Bigl\{ 1, \frac{1}{|k|^4 \sin^4(\theta_k)} \Bigr\} \bigl\| \chi_{[|k|-1, |k|+1]}(\rho) \hat{f}(\rho) \rho^{\frac{3}{2}} \bigr\|_{L^2_{\rho}}^2 \\
 &\lesssim \sum_{\substack{ j \in \bN, \\ j \gg 1}} \log(j) \bigl\| \chi_{[j-10, j+10]}(\rho) \hat{f}(\rho) \rho^{\frac{3}{2}} \bigr\|_{L^2_{\rho}}^2 \\
 &\lesssim \bigl\| f \bigr\|_{H^s_x(\bR^4)}^2. \qedhere
\end{align*}
\end{proof}

\subsection{Strichartz estimates}

We recall the Strichartz estimates for the linear wave equation on~$\bR^{1+4}$,
\begin{equation} \label{equ:linear_wave}
 \left\{ \begin{aligned}
  -\partial_t^2 u + \Delta u &= h, \\
  (u, \partial_t u)|_{t=0} &= (u_0, u_1).
 \end{aligned} \right.
\end{equation}
Let $2 \leq q \leq \infty$ and $2 \leq r < \infty$. We say that the pair $(q,r)$ is \emph{admissible} if
\begin{equation} \label{equ:admissible_condition}
 \frac{1}{q} + \frac{3}{2r} \leq \frac{3}{4}.
\end{equation}
If equality holds in \eqref{equ:admissible_condition} we say that $(q,r)$ is \emph{sharp admissible}.
\begin{proposition}[\protect{Strichartz estimates; \cite{Strichartz}, \cite{Pecher}, \cite{GV2}, \cite{KeelTao}}] \label{prop:strichartz_estimates}
 Let $(u_0, u_1) \in \dot{H}^\gamma_x(\bR^4) \times \dot{H}^{\gamma - 1}_x(\bR^4)$ for some~$\gamma > 0$ and let $u$ be a solution to the linear wave equation~\eqref{equ:linear_wave} on a time interval $I \ni 0$. Suppose $(q,r)$ and $(\tilde{q}, \tilde{r})$ are admissible pairs satisfying the scaling condition
 \[
  \frac{1}{q} + \frac{4}{r} = 2 - \gamma = \frac{1}{\tilde{q}'} + \frac{4}{\tilde{r}'} - 2.
 \]
 Then we have 
 \begin{equation*}
  \|u\|_{L^q_t L^r_x(I \times \bR^4)} \lesssim \|u_0\|_{\dot{H}^\gamma_x(\bR^4)} + \|u_1\|_{\dot{H}^{\gamma-1}_x(\bR^4)} + \|h\|_{L^{\tilde{q}'}_t L^{\tilde{r}'}_x(I\times\bR^4)}.
 \end{equation*}
\end{proposition}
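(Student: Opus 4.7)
The plan is to reduce both homogeneous and inhomogeneous bounds to the classical sharp admissible Strichartz inequalities for the half-wave propagator $e^{\pm it|\nabla|}$, and then to pick up the full admissible range by Sobolev embedding in the spatial variable. Writing the solution as
\[
 u(t) = \cos(t|\nabla|) u_0 + \frac{\sin(t|\nabla|)}{|\nabla|} u_1 + \int_0^t \frac{\sin((t-s)|\nabla|)}{|\nabla|} h(s) \, ds,
\]
and decomposing $\cos$ and $\sin$ into $e^{it|\nabla|} \pm e^{-it|\nabla|}$, it suffices to prove the analogous estimates with $e^{\pm it|\nabla|}$ in place of the propagators appearing above.

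First I would establish the sharp admissible homogeneous estimates, that is the case of equality in \eqref{equ:admissible_condition}. The starting point is the standard dispersive bound for the frequency-localized half-wave propagator on $\bR^4$,
\[
 \bigl\| e^{\pm it|\nabla|} P_N f \bigr\|_{L^\infty_x(\bR^4)} \lesssim N^2 (N|t|)^{-3/2} \|P_N f\|_{L^1_x(\bR^4)},
\]
obtained from a stationary phase analysis of the oscillatory integral kernel. Interpolating this with the trivial $L^2 \to L^2$ bound yields the $L^{r'}_x \to L^r_x$ decay needed to run the $TT^*$ argument of Ginibre--Velo, together with Hardy--Littlewood--Sobolev, giving the sharp admissible estimates on each Littlewood-Paley piece. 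One then sums the pieces via a Littlewood--Paley square function argument, paying $N^{\gamma}$ per piece to recover the Sobolev regularity dictated by scaling. The endpoint $q = 2$ in dimension four requires the Keel--Tao argument, which handles the missing case by a bilinear interpolation against dyadic time intervals.

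To pass from the sharp admissible case to a general admissible pair $(q,r)$ with strict inequality in \eqref{equ:admissible_condition}, I would Sobolev-embed in the spatial variable: given an admissible $(q,r)$, choose $\tilde r \geq r$ so that $(q, \tilde r)$ is sharp admissible with the scaling relation $\frac{1}{q} + \frac{4}{\tilde r} = 2 - \tilde \gamma$ for some $\tilde \gamma \geq \gamma$, and use $\dot H^{\tilde \gamma}_x \hookrightarrow \dot H^{\gamma}_x$ after inverting powers of $|\nabla|$, or equivalently Sobolev-embed the output $L^{\tilde r}_x \hookrightarrow L^r_x$ after redistributing derivatives. This gives the full admissible range claimed.

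For the inhomogeneous estimate I would proceed by duality: the map $h \mapsto \int_{\bR} e^{\mp i s |\nabla|} h(s)\, ds$ is the adjoint of $(u_0, 0) \mapsto e^{\pm it|\nabla|} u_0$, so squaring one gets the untruncated retarded operator bounded into any sharp admissible pair from any dual sharp admissible pair. The Christ--Kiselev lemma then upgrades the untruncated bound to the Duhamel integral $\int_0^t$ whenever $\tilde q' < q$, which is automatic away from the double endpoint; the double endpoint again is Keel--Tao. Passing to the non-sharp admissible range for the inhomogeneous estimate is done exactly as in the homogeneous case by spatial Sobolev embedding. The main (mild) obstacle is bookkeeping: making sure the scaling condition $\frac{1}{q} + \frac{4}{r} = 2 - \gamma = \frac{1}{\tilde q'} + \frac{4}{\tilde r'} - 2$ is respected at every step so that all frequency-summed estimates close at the stated regularity $\gamma$.
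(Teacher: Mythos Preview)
The paper does not supply a proof of this proposition at all: it is stated with citations to \cite{Strichartz}, \cite{Pecher}, \cite{GV2}, \cite{KeelTao} and then used as a black box. So there is nothing to compare against; your sketch is simply a reconstruction of the classical argument behind those references.

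As such a reconstruction, your outline is broadly correct and standard --- Duhamel reduction to $e^{\pm it|\nabla|}$, dispersive estimate plus $TT^*$ for the sharp admissible range, Keel--Tao for the endpoint, duality and Christ--Kiselev for the inhomogeneous part, and Sobolev embedding to fill out the non-sharp admissible pairs. Two small slips are worth flagging. First, your dispersive constant is off: in $\bR^4$ the frequency-localized decay is $\|e^{\pm it|\nabla|}P_N f\|_{L^\infty_x} \lesssim N^4 (N|t|)^{-3/2}\|P_N f\|_{L^1_x}$, not $N^2(N|t|)^{-3/2}$ (check this by rescaling the unit-frequency bound). Second, in the Sobolev-embedding step the inequalities go the other way: given a non-sharp admissible $(q,r)$, the sharp admissible partner $(q,\tilde r)$ has $\tilde r \le r$ and $\tilde\gamma \le \gamma$, and one uses $\||\nabla|^{\gamma-\tilde\gamma}\,\cdot\,\|_{L^{\tilde r}_x} \gtrsim \|\cdot\|_{L^r_x}$ to trade the extra $\gamma-\tilde\gamma$ derivatives for the higher Lebesgue exponent. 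Neither of these affects the strategy, but as written those two sentences would not quite close.
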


It is well-known that for radially symmetric solutions to~\eqref{equ:linear_wave}, a larger range of Strichartz estimates is available, see for instance \cite{Sterbenz} and \cite{Fang_Wang}. In the proof of the key large deviation estimate for the free wave evolution of randomized radially symmetric data in Proposition~\ref{prop:large_deviation_L2Linfty} we will crucially exploit this fact. We say that the pair $(q,r)$ is \emph{radial-admissible} if
\begin{equation*}
 \frac{1}{q} + \frac{3}{r} < \frac{3}{2}.
\end{equation*}
\begin{proposition} [\protect{Radial Strichartz estimates; \cite{Sterbenz}, \cite{Fang_Wang}}] \label{prop:radial_strichartz_estimates}
 Let $(u_0, u_1) \in \dot{H}^\gamma_x(\bR^4) \times \dot{H}^{\gamma - 1}_x(\bR^4)$ be radially symmetric for some $\gamma > 0$ and let $u$ be a radially symmetric solution to the linear wave equation~\eqref{equ:linear_wave} (with $h=0$) on some time interval $I \ni 0$. Suppose $(q,r)$ is a radial-admissible pair satisfying the scaling condition
 \begin{align} \label{equ:scaling}
  \frac{1}{q} + \frac{4}{r} = 2 - \gamma.
 \end{align}
 Then we have 
 \begin{equation*}
  \|u\|_{L^q_t L^r_x(I \times \bR^4)} \lesssim \|u_0\|_{\dot{H}^\gamma_x(\bR^4)} + \|u_1\|_{\dot{H}^{\gamma-1}_x(\bR^4)}.
 \end{equation*}
\end{proposition}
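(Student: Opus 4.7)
The plan is to follow the classical two-step reduction behind both Sterbenz's and Fang--Wang's proofs. First, writing the solution $u$ as a superposition of the half-wave evolutions $e^{\pm it|\nabla|}$ applied to $u_0$ and $|\nabla|^{-1} u_1$, and applying a Littlewood--Paley square-function argument (valid because $q, r \geq 2$), the problem reduces to establishing the frequency-localized bound
\[
 \|e^{\pm it|\nabla|} P_N f\|_{L^q_t L^r_x(\bR\times\bR^4)} \lesssim N^{2 - \frac{1}{q} - \frac{4}{r}}\,\|P_N f\|_{L^2_x(\bR^4)}
\]
for each dyadic $N$ and each radial $f$. Parabolic rescaling further reduces this to the single frequency-one estimate
\[
 \|e^{\pm it|\nabla|} P_1 g\|_{L^q_t L^r_x(\bR\times\bR^4)} \lesssim \|P_1 g\|_{L^2_x(\bR^4)}
\]
for all $(q, r)$ with $\tfrac{1}{q} + \tfrac{4}{r} = 2$ and $\tfrac{1}{q} + \tfrac{3}{r} < \tfrac{3}{2}$. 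Summing the dyadic pieces and invoking the scaling identity~\eqref{equ:scaling} then produces $\|u_0\|_{\dot H^\gamma_x} + \|u_1\|_{\dot H^{\gamma-1}_x}$ on the right-hand side.

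To prove the frequency-one estimate I would exploit the essentially one-dimensional character of the radial wave evolution through a Hankel representation. Writing $g(x) = G(|x|)$ and $r = |x|$, one has
\[
 (e^{\pm it|\nabla|} P_1 g)(x) = c\,r^{-1}\int_0^\infty e^{\pm it\rho}\, J_1(r\rho)\,\widetilde G(\rho)\,\rho^3\,d\rho,
\]
with $\widetilde G$ effectively localized to $\rho \sim 1$. Using the asymptotic $J_1(z) = (2/(\pi z))^{1/2}\cos(z - 3\pi/4) + O(z^{-3/2})$ valid for $z \gtrsim 1$, one obtains in the outer region $r \gtrsim 1$ a decomposition of the form
\[
 r^{3/2}(e^{\pm it|\nabla|} P_1 g)(x) = w_+(t-r) + w_-(t+r) + \mathcal{E}(t, r),
\]
where $w_\pm$ are one-dimensional profiles satisfying $\|w_\pm\|_{L^2(\bR)} \lesssim \|P_1 g\|_{L^2_x(\bR^4)}$ and the error enjoys the faster pointwise decay $|\mathcal{E}(t,r)| \lesssim r^{-1/2}\|P_1 g\|_{L^2_x(\bR^4)}$. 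The inner region $r \lesssim 1$ is handled separately by combining the trivial bound $|J_1(z)| \lesssim z$ with the unit-scale Bernstein inequality \eqref{equ:unit_scale_bernstein}, which yields the uniform pointwise bound $\|e^{\pm it|\nabla|} P_1 g\|_{L^\infty_{t,x}(\bR\times\{|x|\lesssim 1\})} \lesssim \|P_1 g\|_{L^2_x}$ and hence a harmless contribution to any $L^q_t L^r_x$ norm.

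Armed with this pointwise decomposition, the Strichartz norm over the outer region reduces to the weighted one-dimensional expression
\[
 \biggl(\int_\bR\biggl(\int_1^\infty \bigl(|w_+(t-r)| + |w_-(t+r)|\bigr)^r\, r^{3 - \frac{3r}{2}}\, dr\biggr)^{\frac{q}{r}}dt\biggr)^{\frac{1}{q}}.
\]
After the change of variables $s = t \mp r$ and Minkowski's inequality (exchanging $L^q_t$ and $L^r_r$ when $q \leq r$, the opposite order being handled symmetrically), this is controlled by $\bigl(\int_1^\infty r^{3 - 3r/2}\,dr\bigr)^{1/r}\|w_\pm\|_{L^q(\bR)}$. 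The integral converges precisely when $r > 8/3$, and combining this with the scaling identity $\tfrac{1}{q} + \tfrac{4}{r} = 2$ gives exactly the radial-admissibility condition $\tfrac{1}{q} + \tfrac{3}{r} < \tfrac{3}{2}$. Finally, $\|w_\pm\|_{L^q(\bR)}$ is dominated by $\|w_\pm\|_{L^2(\bR)} \lesssim \|P_1 g\|_{L^2_x}$ by one-dimensional transport and Plancherel (or, more carefully, by further interpolation within the 1D reduction).

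The principal obstacle is the rigorous extraction of the outgoing/incoming profiles $w_\pm$ with quantitative error control: this requires a careful van der Corput/non-stationary-phase analysis near the endpoints of the $\rho$-integral, together with a precise splicing of the $r \lesssim 1$ and $r \gtrsim 1$ regimes that does not incur a loss in the sharp exponents. Sterbenz's implementation sidesteps some of these technicalities by executing a $TT^*$ argument directly on the spherical-harmonic projections (the radial case being the zeroth mode, for which the dispersive kernel decays fastest) and appealing to a Hardy--Littlewood--Sobolev bound on the resulting scalar kernel; Fang--Wang proceed via an angular Littlewood--Paley analysis to the same effect. Either path ultimately delivers the extended radial-admissible range stated in the proposition.
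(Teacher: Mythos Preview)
The paper does not prove this proposition: it is quoted as a known result from \cite{Sterbenz} and \cite{Fang_Wang} and used as a black box in the proof of Proposition~\ref{prop:large_deviation_L2Linfty}. There is therefore no in-paper argument to compare your proposal against.

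Your sketch is in the spirit of the cited references, but several steps are imprecise as written. First, the claimed scaling identity ``$\tfrac{1}{q}+\tfrac{4}{r}=2$'' at the frequency-one level conflates the scaling relation~\eqref{equ:scaling} (which carries $\gamma$) with the unit-frequency reduction; at $N=1$ no scaling constraint is imposed, and the full admissible range must be reached by interpolation with the energy estimate $(q,r)=(\infty,2)$, not by that identity. Second, your weight computation only yields the estimate for $r>8/3$, whereas radial admissibility allows $r$ down to $2$ (with $q$ large); you must interpolate to cover the remaining pairs. Third, the Minkowski step $L^q_t L^r_r \leq L^r_r L^q_t$ is valid only when $q\leq r$, and the ``opposite order handled symmetrically'' remark does not survive scrutiny --- for $q>r$ one either interpolates or runs the $TT^\ast$ argument directly. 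Finally, the passage from $\|w_\pm\|_{L^2}$ to $\|w_\pm\|_{L^q}$ for $q>2$ requires the frequency localization of $w_\pm$ (a one-dimensional Bernstein step), which you allude to but do not state. None of these is fatal, and the overall architecture (Littlewood--Paley reduction, Bessel asymptotics, one-dimensional transport) is correct; but since the paper simply cites the result, a cleaner option is to invoke \cite{Sterbenz} or \cite{Fang_Wang} directly rather than reproduce a rough version of their arguments.
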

In order to improve the range of Sobolev regularities for which our almost sure scattering result from Theorem~\ref{thm:random_scattering} holds, we exploit the refined Strichartz estimates of Klainerman-Tataru~\cite[Appendix A]{KlTa}. We first state these estimates in their most general form in four space dimensions.
\begin{proposition} [\protect{Klainerman-Tataru refinement of Strichartz estimates; \cite{KlTa}}] \label{prop:kt_strichart}
 Let $\ell, m \in \bZ$ with $\ell < m$ and let $(q,r)$ be an admissible pair. Let $\varphi_Q \in C_c^\infty(\bR^4)$ be a smooth bump function supported in a ball of radius $\sim 2^{\ell}$, which is located at distance $\sim 2^m$ from the origin. Denote by $P_Q$ the Fourier multiplier with symbol $\varphi_Q$. Then it holds that
 \begin{equation} \label{equ:strichartz_refined}
  \bigl\| e^{\pm i t |\nabla|} P_Q f \bigr\|_{L^q_t L^r_x(\bR \times \bR^4)} \lesssim 2^{(\frac{1}{2} - \frac{1}{r}) (\ell - m)} 2^{(2 - \frac{1}{q} - \frac{4}{r}) m} \| P_Q f \|_{L^2_x(\bR^4)}.
 \end{equation}
\end{proposition}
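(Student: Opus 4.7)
The strategy is to exploit the anisotropic structure of the Fourier support $Q$ by passing to a null frame adapted to the direction $e$ from the origin to the center of $Q$, thereby reducing the 4D wave Strichartz estimate on $Q$ to a 3D Schr\"odinger Strichartz estimate at unit scale. After a spatial rotation I may assume $e = e_4$, so that $Q$ is a ball of radius $\sim 2^\ell$ centered at $2^m e_4$. I rescale the frequency variable as $\xi = 2^m e_4 + 2^\ell \zeta$ with $\zeta = (\zeta', \zeta_4) \in \bR^3 \times \bR$ varying in a bounded set, and Taylor expand the dispersion relation to obtain
\[
 |\xi| = 2^m + 2^\ell \zeta_4 + \tfrac{1}{2} 2^{2\ell - m} |\zeta'|^2 + O\bigl(2^{3\ell - 2m}\bigr).
\]
Modulo the error term, the phase $t |\xi| + x \cdot \xi$ in the oscillatory integral defining $e^{it|\nabla|} P_Q f$ becomes
\[
 2^m (t + x_4) + 2^\ell (t + x_4) \zeta_4 + 2^\ell x' \cdot \zeta' + \tfrac{1}{2} 2^{2\ell - m} t |\zeta'|^2,
\]
which identifies $y_4 := t + x_4$ as the null coordinate along the direction of propagation and exhibits a 3D Schr\"odinger-type dispersion in the transverse variables $x' \in \bR^3$ with effective time $2^{2\ell - m} t$.

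Next I change coordinates $(t, x) \mapsto (s, y_4, y') := (t, t + x_4, x')$, which has unit Jacobian, and rescale to unit scale via $\tilde s = \tfrac{1}{2} 2^{2\ell - m} s$, $\tilde y_4 = 2^\ell y_4$, $\tilde y' = 2^\ell y'$. After factoring out the modulation $e^{i 2^m y_4}$, the resulting function $\hat F(\tilde s, \tilde y_4, \tilde y')$ is, modulo the $O(2^{3\ell - 2m})$ phase error, a 3D Schr\"odinger evolution in $\tilde y'$ of a function $\Phi(\tilde y_4, \tilde y')$, with $\tilde y_4$ acting only as a passive parameter. Since $\Phi$ is the inverse Fourier transform of the rescaled $\tilde \varphi \hat f$, Plancherel yields $\|\Phi\|_{L^2_{\tilde y_4, \tilde y'}} \sim \|P_Q f\|_{L^2_x}$, and $\Phi$ is band-limited to a unit ball in its $\tilde y_4$-frequency. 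I then invoke the 3D Schr\"odinger Strichartz estimate in $(\tilde s, \tilde y')$ -- crucially noting that the sharp Schr\"odinger admissibility $\tfrac{2}{q} + \tfrac{3}{r} = \tfrac{3}{2}$ coincides with the sharp 4D wave admissibility $\tfrac{1}{q} + \tfrac{3}{2r} = \tfrac{3}{4}$ -- and combine it with Minkowski's inequality (to exchange $L^q_{\tilde s}$ and $L^r_{\tilde y_4}$) together with Bernstein's inequality in the unit-band-limited $\tilde y_4$ direction to conclude $\| \hat F \|_{L^q_{\tilde s} L^r_{\tilde y_4, \tilde y'}} \lesssim \| P_Q f \|_{L^2_x}$. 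Tracking the Jacobians of the change of variables and the $2^{2\ell}$ prefactor arising from the frequency rescaling, and simplifying via the admissibility relation, produces exactly the claimed bound $2^{(\frac{1}{2} - \frac{1}{r})(\ell - m)} 2^{(2 - \frac{1}{q} - \frac{4}{r}) m}$.

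The principal technical obstacle is the rigorous treatment of the $O(2^{3\ell - 2m})$ phase corrections that prevent $\hat F$ from being an exact Schr\"odinger evolution. Since $3\ell - 2m < \ell$ whenever $\ell < m$, these corrections are genuinely lower order and can either be absorbed perturbatively on the phase, or eliminated by subdividing $Q$ into still finer tiles on which the quadratic approximation of $|\xi|$ holds exactly up to $O(1)$. A secondary difficulty is covering the full admissibility range: the Minkowski exchange step is favorable when $q \leq r$, which holds for sharp admissible pairs with $r \geq 10/3$; the complementary regime is handled by a direct $TT^\ast$ analysis in which the unit-scale $\zeta_4$-cutoff inside $\tilde \varphi$ generates a rapidly decaying Schwartz kernel in the $\tilde y_4$-direction that decouples from the genuinely dispersive Schr\"odinger kernel in $\tilde y'$. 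Finally, non-sharp admissible pairs reduce to the sharp case via Sobolev embedding in the spatial variable.
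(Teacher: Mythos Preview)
The paper does not supply its own proof; the proposition is quoted from Klainerman--Tataru~\cite{KlTa}, and the ensuing remark only records that the gain $2^{(\frac{1}{2}-\frac{1}{r})(\ell-m)}$ originates from a corresponding improvement in the $L^1_x\to L^\infty_x$ dispersive estimate for the localized propagator $e^{\pm it|\nabla|}P_Q$, after which all admissible pairs follow from the standard $TT^\ast$ argument (with Keel--Tao~\cite{KeelTao} for the endpoint $(2,6)$). Your route is genuinely different: instead of an improved pointwise dispersive bound fed into abstract Strichartz machinery, you linearize the cone along the direction of $Q$, pass to a null frame, and reduce to a unit-scale $3$D Schr\"odinger Strichartz estimate, exploiting the numerical coincidence that sharp $4$D wave admissibility $\tfrac{1}{q}+\tfrac{3}{2r}=\tfrac{3}{4}$ is exactly sharp $3$D Schr\"odinger admissibility $\tfrac{2}{q}+\tfrac{3}{r}=\tfrac{3}{2}$. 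This is more geometric and explains the refinement as the statement that a frequency box of side $2^\ell$ at distance $2^m$ from the origin evolves like a Schr\"odinger wave packet with effective dispersion time $2^{2\ell-m}t$; the price you pay is the Minkowski exchange and the Bernstein step in the passive variable $\tilde y_4$, which the dispersive/$TT^\ast$ route in~\cite{KlTa} avoids entirely.

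One point to tighten: the $O(2^{3\ell-2m})$ phase remainder, expressed in the rescaled time $\tilde s\sim 2^{2\ell-m}t$, contributes a phase of size $\sim 2^{\ell-m}\tilde s$, which is small per unit $\tilde s$ but not uniformly bounded over $\tilde s\in\bR$; thus neither a naive perturbative absorption nor a further subdivision of $Q$ (which costs almost-orthogonality when you re-sum) disposes of it cleanly. The robust fix is not to Taylor-expand at all but to keep the exact phase $|\xi|$ and observe that its transverse Hessian on $Q$ is uniformly $\sim 2^{-m}I_3$; the Schr\"odinger-type Strichartz bound then follows from the standard stationary-phase dispersive estimate for nondegenerate curved phases --- which is precisely the estimate~\cite[(A.66)]{KlTa} underlying the cited proof, so the two approaches reconverge at that step.
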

\begin{remark}
 The gain $2^{(\frac{1}{2} - \frac{1}{r}) (\ell - m)}$ in the refined Strichartz estimate~\eqref{equ:strichartz_refined} originates from a corresponding gain in the $L^1_x(\bR^4) \to L^\infty_x(\bR^4)$ dispersive estimate for the propagator $e^{\pm i t |\nabla|} P_Q$, see~\cite[(A.66)]{KlTa}. In~\cite{KlTa}, the estimate~\eqref{equ:strichartz_refined} is derived for all non-endpoint admissible pairs $(q,r) \neq (2,6)$ via the usual $TT^\ast$-argument. However, we note that one also obtains the refined Strichartz estimate~\eqref{equ:strichartz_refined} for the endpoint pair $(q,r) = (2,6)$ via the Keel-Tao argument~\cite{KeelTao}.
\end{remark}

In the following corollary, we reformulate the refined Strichartz estimates of Klainerman-Tataru into a convenient form for their application to the unit-scale frequency projections $P_k$ defined in~\eqref{equ:unit_scale_proj}.
\begin{corollary} \label{cor:KlTa_gain_applied_Pk}
 Let $(q,r)$ be an admissible pair. Then we have for all $k \in \bZ^4$ with $|k| > 4$ that
 \begin{equation} \label{equ:refined_cor}
  \bigl\| e^{\pm i t |\nabla|} P_k f \bigr\|_{L^q_t L^r_x(\bR \times \bR^4)} \lesssim \bigl\| |\nabla|^{\frac{1}{q}} P_k f \bigr\|_{L^2_x(\bR^4)}.
 \end{equation}
\end{corollary}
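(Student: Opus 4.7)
The plan is to apply the Klainerman--Tataru refined Strichartz estimate of Proposition~\ref{prop:kt_strichart} directly to the unit-scale projection $P_k$. The Fourier support of $P_kf$ lies in $B(k,1)$, and since $|k|>4$ I can realize $P_k$ as a finite sum of Fourier multipliers of the type $P_Q$ appearing in Proposition~\ref{prop:kt_strichart} with $2^\ell\sim 1$ and $2^m\sim |k|$ (in particular $\ell<m$). Inserting these parameters yields, for every admissible pair $(q,r)$,
\begin{equation*}
    \bigl\|e^{\pm it|\nabla|}P_kf\bigr\|_{L^q_tL^r_x(\bR\times\bR^4)} \lesssim |k|^{\frac{3}{2}-\frac{1}{q}-\frac{3}{r}}\,\|P_kf\|_{L^2_x(\bR^4)}.
\end{equation*}

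Next I would observe that the desired exponent is attained exactly at sharp admissibility: when $\frac{1}{q}+\frac{3}{2r}=\frac{3}{4}$, a short computation shows $\frac{3}{2}-\frac{1}{q}-\frac{3}{r}=\frac{1}{q}$, so for a sharp admissible pair the bound above reads
\begin{equation*}
    \bigl\|e^{\pm it|\nabla|}P_kf\bigr\|_{L^q_tL^r_x(\bR\times\bR^4)} \lesssim |k|^{\frac{1}{q}}\,\|P_kf\|_{L^2_x(\bR^4)}.
\end{equation*}
Since $|k|>4$, the symbol $|\xi|^{1/q}$ is smooth and comparable to $|k|^{1/q}$ on the unit ball $B(k,1)$, so the right-hand side is equivalent to $\bigl\||\nabla|^{1/q}P_kf\bigr\|_{L^2_x(\bR^4)}$, which is the target estimate~\eqref{equ:refined_cor} in the sharp case.

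For a general (not necessarily sharp) admissible $(q,r)$, admissibility gives $\frac{3}{2}-\frac{1}{q}-\frac{3}{r}\geq \frac{1}{q}$, so the direct application of Proposition~\ref{prop:kt_strichart} overshoots and is not by itself sufficient. I would instead pick a sharp admissible companion $(q,\tilde r)$ with $\tilde r\leq r$ (such a $\tilde r\in[2,6]$ always exists for $q\in[2,\infty]$), apply the sharp case handled above to obtain the bound with $\tilde r$ in place of $r$, and then upgrade the spatial norm from $L^{\tilde r}_x$ to $L^r_x$ by applying the unit-scale Bernstein inequality~\eqref{equ:unit_scale_bernstein} pointwise in $t$ before taking the $L^q_t$ norm. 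The main bookkeeping point—really the only obstacle—is verifying that the Klainerman--Tataru gain $2^{(1/2-1/r)(\ell-m)}$ exactly compensates the frequency loss in the Strichartz exponent at sharp admissibility to produce the scale-invariant factor $|k|^{1/q}$; once this is checked, the extension to arbitrary admissible pairs via unit-scale Bernstein is immediate.
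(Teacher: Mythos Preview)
Your proposal is correct and follows essentially the same route as the paper: pick the sharp admissible companion $(q,\tilde r)$ with $\tilde r\le r$, apply the Klainerman--Tataru estimate at $(q,\tilde r)$ to obtain the exponent $\frac{1}{q}$, and use the unit-scale Bernstein inequality~\eqref{equ:unit_scale_bernstein} to pass from $L^{\tilde r}_x$ to $L^r_x$. The only cosmetic differences are that the paper applies Bernstein first and then Proposition~\ref{prop:kt_strichart}, and that $P_k$ is already a multiplier of the $P_Q$ type (no finite sum is needed).
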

\begin{proof}
 Let $(q,r)$ be an admissible pair and let $\tilde{r} \leq r$ such that $(q, \tilde{r})$ is sharp admissible. Then by the unit-scale Bernstein estimate~\eqref{equ:unit_scale_bernstein}, the refined Strichartz estimate~\eqref{equ:strichartz_refined} for the pair $(q, \tilde{r})$, and the relation~\eqref{equ:admissible_condition} for the sharp admissible pair $(q,\tilde{r})$, we obtain that
 \begin{align*}
  \bigl\| e^{\pm i t |\nabla|} P_k f \bigr\|_{L^q_t L^r_x(\bR \times \bR^4)} &\lesssim \bigl\| e^{\pm i t |\nabla|} P_k f \bigr\|_{L^q_t L^{\tilde{r}}_x(\bR \times \bR^4)} \\
  &\lesssim 2^{-(\frac{1}{2} - \frac{1}{\tilde{r}}) \log_2(|k|)} 2^{(2 - \frac{1}{q} - \frac{4}{\tilde{r}}) \log_2(|k|)} \| P_k f \|_{L^2_x(\bR^4)} \\
  &\simeq 2^{\frac{1}{q} \log_2(|k|)} \| P_k f \|_{L^2_x(\bR^4)} \\
  &\lesssim \bigl\| |\nabla|^{\frac{1}{q}} P_k f \bigr\|_{L^2_x(\bR^4)}. \qedhere
 \end{align*}
\end{proof}

\section{Proof of Theorem \ref{thm:conditional_scattering}: Conditional scattering} \label{sec:cond_scat}

We begin by summarizing the local existence theory for the forced cubic nonlinear wave equation~\eqref{equ:forced_cubic} in the following lemma. Its proof is standard and uses Strichartz estimates for the wave equation.

\begin{lemma} \label{lem:local_theory}
 Let $(v_0, v_1) \in \dot{H}^1_x(\bR^4) \times L^2_x(\bR^4)$ and $F \in L^3_{t,loc} L^6_x(\bR\times\bR^4)$. Then there exists a unique solution 
 \[
  (v, \partial_t v) \in C\bigl( I_\ast; \dot{H}^1_x(\bR^4) \bigr) \cap L^3_{t,loc} L^{6}_x \bigl(I_\ast \times \bR^4 \bigr) \times C \bigl( I_\ast; L^2_x(\bR^4) \bigr)
 \]
 to the Cauchy problem~\eqref{equ:forced_cubic} with maximal time interval of existence $I_\ast = (T_-, T_+)$. Moreover, we have the standard finite time blowup criterion:
 \begin{equation} \label{equ:blowup_criterion}
  T_+ < \infty \qquad \Longrightarrow \qquad \|v\|_{L^3_t L^6_x([0,T_+)\times\bR^4)} = + \infty
 \end{equation}
 with an analogous statement if $T_- > -\infty$. If the forcing term satisfies the stronger condition $F \in L^3_t L^6_x(\bR\times\bR^4)$, then a global solution $v(t)$ to~\eqref{equ:forced_cubic} scatters to free waves as $t \to \pm \infty$ if $\|v\|_{L^3_t L^6_x(\bR\times\bR^4)} < \infty$.
\end{lemma}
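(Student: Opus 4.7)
The plan is to run a standard Banach fixed-point argument for the Duhamel map
\[
 \Phi(v)(t) := S(t)(v_0,v_1) - \int_0^t \frac{\sin((t-s)|\nabla|)}{|\nabla|}(F+v)^3(s)\, ds
\]
on a short interval $I \ni 0$, using the Strichartz estimates of Proposition~\ref{prop:strichartz_estimates}. I would work with the admissible solution pair $(q,r) = (3,6)$, which satisfies $\tfrac{1}{q} + \tfrac{4}{r} = 1$ and so corresponds to $\gamma = 1$, and with the sharp admissible dual pair $(\tilde q, \tilde r) = (\infty, 2)$ for the forcing, to obtain
\[
 \|v\|_{L^\infty_t \dot H^1_x \cap L^3_t L^6_x(I\times\bR^4)} + \|\partial_t v\|_{L^\infty_t L^2_x(I\times\bR^4)} \lesssim \|(v_0,v_1)\|_{\dot H^1_x \times L^2_x} + \|(F+v)^3\|_{L^1_t L^2_x(I\times\bR^4)},
\]
and H\"older's inequality in space-time gives $\|(F+v)^3\|_{L^1_t L^2_x(I\times\bR^4)} \lesssim \|F\|_{L^3_t L^6_x(I\times\bR^4)}^3 + \|v\|_{L^3_t L^6_x(I\times\bR^4)}^3$. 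Since $F \in L^3_{t,loc} L^6_x$, for any initial time I can choose $I$ small enough that $\|F\|_{L^3_t L^6_x(I \times \bR^4)}$ is as small as needed (depending on $\|(v_0,v_1)\|_{\dot H^1_x \times L^2_x}$), so that $\Phi$ is a contraction on a suitable closed ball in $X_I := L^\infty_t \dot H^1_x \cap L^3_t L^6_x(I\times\bR^4)$. This yields a unique local solution; iterating forward and backward in time produces the maximal interval $I_\ast = (T_-, T_+)$.

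For the blowup criterion I would argue contrapositively: assume $T_+ < \infty$ and $\|v\|_{L^3_t L^6_x([0,T_+))} < \infty$. Then I partition $[0, T_+)$ into finitely many subintervals on which both $\|v\|_{L^3_t L^6_x}$ and $\|F\|_{L^3_t L^6_x}$ are small, and chain the Strichartz estimate across these subintervals to get a uniform bound on $\|(v(t), \partial_t v(t))\|_{\dot H^1_x \times L^2_x}$ for $t \in [0, T_+)$. Another Strichartz application in fact shows that $(v(t), \partial_t v(t))$ is Cauchy in $\dot H^1_x \times L^2_x$ as $t \to T_+^-$, so it has a limit, and applying the local theory at a time just before $T_+$ with those data as new initial data extends the solution past $T_+$, contradicting maximality.

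For the scattering statement, I would use the trigonometric identity $\sin((t-s)|\nabla|) = \sin(t|\nabla|)\cos(s|\nabla|) - \cos(t|\nabla|)\sin(s|\nabla|)$ to rewrite Duhamel and isolate the candidate asymptotic data
\[
 v_0^{+} := v_0 + \int_0^\infty \frac{\sin(s|\nabla|)}{|\nabla|} (F+v)^3(s)\, ds, \qquad v_1^{+} := v_1 - \int_0^\infty \cos(s|\nabla|)(F+v)^3(s)\, ds.
\]
Under the stronger assumption $F \in L^3_t L^6_x(\bR\times\bR^4)$ together with $\|v\|_{L^3_t L^6_x(\bR\times\bR^4)} < \infty$, the nonlinearity $(F+v)^3$ lies in $L^1_t L^2_x(\bR\times\bR^4)$, hence the tail integrals converge in $\dot H^1_x \times L^2_x$ by the Strichartz/energy estimate together with dominated convergence, yielding $(v_0^{+}, v_1^{+}) \in \dot H^1_x \times L^2_x$ and $\|\nabla_{t,x}(v(t) - S(t)(v_0^+, v_1^+))\|_{L^2_x} \to 0$ as $t \to +\infty$. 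Negative-time scattering is identical.

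The entire argument is standard; the only mildly non-standard feature is the presence of $F$ inside the cubic nonlinearity, which is handled by the pointwise bound $|F+v|^3 \lesssim |F|^3 + |v|^3$, so there is no substantial obstacle. The key structural input is the divisibility of the $L^3_t L^6_x$ norm of $F$, which ensures that the smallness hypothesis required to run the contraction can always be met on a short enough time interval.
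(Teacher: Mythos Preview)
Your argument is correct and matches the paper's approach: the paper does not give a detailed proof of this lemma, stating only that it is standard and uses the Strichartz estimates for the wave equation. Your contraction-mapping scheme with the pair $(q,r)=(3,6)$, the contrapositive blowup argument, and the Duhamel-tail scattering argument are exactly the standard ingredients implicit in that remark.
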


The proof of Theorem~\ref{thm:conditional_scattering} crucially relies on the complete global well-posedness and scattering theory for the energy-critical defocusing nonlinear wave equation on~$\bR^4$ at energy regularity, established in the classical works of Struwe~\cite{Struwe}, Grillakis~\cite{Grillakis}, Ginibre-Soffer-Velo~\cite{Ginibre_Soffer_Velo}, Shatah-Struwe~\cite{Shatah_Struwe}, Bahouri-Shatah~\cite{Bahouri_Shatah}, Bahouri-G\'erard~\cite{Bahouri_Gerard}, Nakanishi~\cite{Nakanishi}, and Tao~\cite{Tao}.

\begin{theorem} \label{thm:cubic}
 There exists a non-decreasing function $K \colon [0,\infty) \to [0, \infty)$ with the following property. Let $(u_0, u_1) \in \dot{H}^1_x(\bR^4) \times L^2_x(\bR^4)$ and $t_0 \in \bR$. Then there exists a unique global solution
 \[
  (u, \partial_t u) \in C \bigl(\bR; \dot{H}^1_x(\bR^4) \bigr) \cap L^3_t L^{6}_x \bigl( \bR \times \bR^4 \bigr) \times C \bigl( \bR; L^2_x(\bR^4) \bigr)
 \]
 to the energy-critical defocusing nonlinear wave equation
 \begin{align} \label{equ:cubic}
  \left\{ \begin{aligned}
          -\partial_t^2 u + \Delta u &= u^3 \text{ on } \bR \times \bR^4, \\
          (u, \partial_t u)|_{t=t_0} &= (u_0, u_1) \in \dot{H}^1_x(\bR^4) \times L^2_x(\bR^4),
         \end{aligned} \right.
 \end{align}
 satisfying the a priori bound
 \[
  \|u\|_{L_t^3 L_x^{6}(\bR \times \bR^4 )} \leq K \bigl( E(u_0, u_1) \bigr),
 \]
 where
 \[
  E(u_0, u_1) = \int_{\bR^4} \frac{1}{2} |\nabla_x u_0|^2 + \frac{1}{2} |u_1|^2 + \frac{1}{4} |u_0|^4 \, dx.
 \]
 In particular, $u$ scatters to a free wave as $t \to \pm \infty$.
\end{theorem}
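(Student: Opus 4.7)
The plan is to assemble four ingredients, each of which is available from the cited classical works: (i) local well-posedness at energy regularity, (ii) finite-time energy conservation preventing blowup, (iii) the a priori scattering norm bound $\|u\|_{L^3_t L^6_x(\bR\times\bR^4)} \leq K(E(u_0,u_1))$, and (iv) scattering as a consequence of (iii). Without loss of generality we take $t_0 = 0$.

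For step (i), I would run the standard contraction mapping argument in the space $C_t(\dot H^1_x) \cap L^3_t L^6_x$ on a small time interval, using the Strichartz estimates of Proposition \ref{prop:strichartz_estimates} at the energy-admissible pair $(q,r)=(3,6)$, together with H\"older's inequality $\|u^3\|_{L^1_t L^2_x} \lesssim \|u\|_{L^3_t L^6_x}^3$. This yields a unique local solution satisfying the standard blowup criterion in $L^3_t L^6_x$. For step (ii), since \eqref{equ:cubic} is Hamiltonian and defocusing, the energy functional is non-increasing (in fact conserved for sufficiently regular solutions and preserved under the local flow) and therefore controls $\|\nabla_{t,x} u(t)\|_{L^2_x}$ uniformly in $t$. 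Combined with the Morawetz-type spacetime bound of Morawetz--Strauss and the Shatah--Struwe non-concentration argument \cite{Shatah_Struwe, Struwe, Grillakis, Ginibre_Soffer_Velo}, one shows that the scattering norm cannot blow up in finite time, upgrading local to global existence in the energy class.

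The key step, and the main obstacle, is step (iii): establishing a \emph{global} spacetime bound whose size depends \emph{only} on the energy. Here I would follow the concentration compactness / profile decomposition strategy of Bahouri--G\'erard \cite{Bahouri_Gerard}. Suppose for contradiction that no such $K$ exists; then one can extract a sequence of solutions $u_n$ with uniformly bounded energy but $\|u_n\|_{L^3_tL^6_x} \to \infty$. Applying Bahouri--G\'erard's profile decomposition to the initial data $(u_{0,n},u_{1,n})$ in $\dot H^1_x \times L^2_x$, writing
\begin{equation}
  (u_{0,n}, u_{1,n}) = \sum_{j=1}^{J} \bigl(\phi_0^{(j)}, \phi_1^{(j)}\bigr)_n + (w_0^J, w_1^J)_n,
\end{equation}
with asymptotically orthogonal parameters and small remainder in the scattering norm of its linear evolution, the Pythagorean decomposition of the energy together with the nonlinear perturbation lemma reduces the problem to a single profile. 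Minimizing the energy over all such bad profiles produces a \emph{critical element} whose trajectory is pre-compact in $\dot H^1_x \times L^2_x$ modulo the symmetries of the equation (scaling and translation). The final step is a rigidity argument: using the classical Morawetz identity (or the virial/Nakanishi monotonicity \cite{Nakanishi}, or the Kenig--Merle channel-of-energy approach when applicable) on the compact-modulo-symmetry trajectory, one forces the critical element to vanish, contradicting $\|u\|_{L^3_t L^6_x} = \infty$. This yields the existence of the non-decreasing function $K$.

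Finally, step (iv) is routine: once $\|u\|_{L^3_t L^6_x(\bR\times\bR^4)} < \infty$, a standard application of the Strichartz inequality of Proposition \ref{prop:strichartz_estimates} to the Duhamel formula shows that the nonlinear part $\int_0^t \frac{\sin((t-s)|\nabla|)}{|\nabla|} u(s)^3\,ds$ converges in $\dot H^1_x \times L^2_x$ as $t \to \pm\infty$, producing the asymptotic free data $(u_0^\pm, u_1^\pm)$ and hence scattering. The main difficulty, as emphasized above, lies squarely in step (iii); the remaining steps follow from now-classical Strichartz-based arguments and energy conservation.
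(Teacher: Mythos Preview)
The paper does not supply its own proof of this theorem; it is quoted as a classical black-box result from the works of Struwe, Grillakis, Ginibre--Soffer--Velo, Shatah--Struwe, Bahouri--Shatah, Bahouri--G\'erard, Nakanishi, and Tao cited just before the statement. Your sketch is a reasonable outline of how the result is actually established in that literature, and in that sense there is nothing to correct.

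One comment on your step (iii): you blend the original Bahouri--G\'erard argument with the later Kenig--Merle critical-element framework. Bahouri--G\'erard \cite{Bahouri_Gerard} do not extract a minimal-energy blowup solution with precompact trajectory and then kill it by rigidity; rather, they run an induction-on-energy argument in which the Pythagorean splitting of the energy forces each nonlinear profile to carry strictly less energy than the full sequence, so the desired bound applies to each profile by the inductive hypothesis, and a nonlinear superposition (perturbation) lemma then bounds the scattering norm of the sum, contradicting divergence. The Kenig--Merle route you describe also works and has become the standard modern presentation, but it is a genuinely different organization of the same circle of ideas. Either way, the existence of the function $K$ follows, which is all the present paper needs.
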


Finally, we need a suitable perturbation theory for the forced cubic nonlinear wave equation. Such perturbation results appeared first in the context of the three-dimensional energy-critical nonlinear Schr\"odinger equation in \cite{CKSTT08}. The proof of the following perturbation result is a straightforward modification of the standard formulation which typically involves nonlinearities of the form $u^3 + e$ for an additive error term $e$.

\begin{proposition} \label{prop:pert}
 Let $I = [t_0, t_1] \subset \bR$ be a compact time interval and let $(v_0, v_1) \in \dot{H}^1_x(\bR^4) \times L^2_x(\bR^4)$. Let $u(t)$ be the solution defined on $I \times \bR^4$ to the defocusing cubic nonlinear wave equation
 \begin{equation}
  \left\{ \begin{aligned}
   -\partial_t^2 u + \Delta u &= u^3 \text{ on } I \times \bR^4, \\
   (u, \partial_t u)|_{t=t_0} &= (v_0, v_1) \in \dot H^1_x(\bR^4) \times L^2_x(\bR^4).
  \end{aligned} \right.
 \end{equation}
 Suppose that
 \[
  \|u\|_{L_t^3L_x^{6}(I\times\bR^4)} \leq K
 \]
 for some $K > 0$. Then there exists $\varepsilon_0(K) := c \exp (- C K^3) > 0$ for some absolute constants $0 < c \ll 1$ and $C > 0$ such that if
 \[
  0 < \varepsilon \leq \varepsilon_0(K), 
 \] 
 then for any forcing term $F \in L_t^3 L_x^{6}(I \times \bR^4)$ with
 \[
  \|F\|_{L_t^3L_x^{6}(I\times\bR^4)} \leq \varepsilon,
 \]
 there exists a unique solution $v(t)$ on $I \times \bR^4$ to the forced cubic nonlinear wave equation
 \begin{equation}
  \left\{ \begin{aligned}
   -\partial_t^2 v + \Delta v &= (F+v)^3 \text{ on } I \times \bR^4, \\
   (v, \partial_t v)|_{t=t_0} &= (v_0,v_1) \in \dot{H}^1_x(\bR^4) \times L^2_x(\bR^4).
  \end{aligned} \right.
 \end{equation}
 Furthermore, we have the bound
 \begin{equation}
  \|\nabla_{t,x}( u -v)\|_{L_t^\infty L_x^2(I \times \bR^4)} + \| u -v\|_{L_t^3L_x^{6}(I \times \bR^4)} \lesssim \varepsilon \exp (C K^3).
 \end{equation}
\end{proposition}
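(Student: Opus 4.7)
The strategy is to pass to the difference $w := v - u$, subdivide the time interval into pieces on which $u$ has small scattering norm, and then compound a Strichartz-based bootstrap across the subintervals. If $v$ exists, then $w$ satisfies
\[
 -\partial_t^2 w + \Delta w = (F + u + w)^3 - u^3 =: P(F, u, w), \qquad (w, \partial_t w)|_{t = t_0} = (0, 0),
\]
where $P(F, u, w)$ is the sum of monomials $F^a u^b w^c$ with $a + b + c = 3$ and $a + c \geq 1$. The goal is to construct $w$ on all of $I$ in the space $L^\infty_t \dot{H}^1_x \cap L^3_t L^6_x$ with $\partial_t w \in L^\infty_t L^2_x$, while simultaneously obtaining the claimed quantitative bound. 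By divisibility of the norm, the hypothesis $\|u\|_{L^3_t L^6_x(I)} \leq K$ lets us partition $I$ into $N \lesssim (K/\eta)^3$ consecutive closed subintervals $I_1, \ldots, I_N$ on which $\|u\|_{L^3_t L^6_x(I_j)} \leq \eta$, where $\eta \in (0, 1)$ is a small absolute constant to be fixed below.

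\textbf{Subinterval estimate.} Fix $j$ and assume that $w$ has been constructed up to the left endpoint $t_j$ of $I_j$. The Strichartz estimate of Proposition~\ref{prop:strichartz_estimates} at $\dot{H}^1$ regularity with dual endpoint $(\tilde{q}', \tilde{r}') = (1, 2)$ yields
\[
 \|\nabla_{t,x} w\|_{L^\infty_t L^2_x(I_j)} + \|w\|_{L^3_t L^6_x(I_j)} \leq C \|\nabla_{t,x} w(t_j)\|_{L^2_x} + C \bigl\| P(F, u, w) \bigr\|_{L^1_t L^2_x(I_j)}.
\]
Each monomial $F^a u^b w^c$ is bounded in $L^1_t L^2_x$ by three applications of H\"older in the product $L^3_t L^6_x \times L^3_t L^6_x \times L^3_t L^6_x$, giving a contribution of $\varepsilon^a \eta^b S_j^c$ with $S_j := \|w\|_{L^3_t L^6_x(I_j)}$. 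Running a standard continuity argument under the bootstrap assumption $S_j \leq \eta$ and using $\varepsilon \leq \eta$, all nonlinear contributions of the form $(\cdots) S_j$ carry a prefactor of order $\eta^2$ that can be absorbed into the left-hand side once $\eta$ is fixed as a small universal constant; the remaining terms with $c = 0$ are bounded by $\varepsilon^3 + \varepsilon^2 \eta + \varepsilon \eta^2 \lesssim \eta^2 \varepsilon$. The net outcome is
\[
 S_j + \|\nabla_{t,x} w(t_{j+1})\|_{L^2_x} \leq 2C \|\nabla_{t,x} w(t_j)\|_{L^2_x} + C' \varepsilon
\]
for absolute constants $C, C'$.

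\textbf{Iteration and main obstacle.} Iterating the last display across the $N$ subintervals yields $\|\nabla_{t,x} w(t_j)\|_{L^2_x} \leq (2C)^j C' \varepsilon$, and summing the $S_j$ bounds gives
\[
 \|\nabla_{t,x}(v - u)\|_{L^\infty_t L^2_x(I)} + \|v - u\|_{L^3_t L^6_x(I)} \lesssim (2C)^N \varepsilon \lesssim \varepsilon \exp(C'' K^3).
\]
Choosing $\varepsilon_0(K) := c \exp(-C K^3)$ with $c$ sufficiently small guarantees that the bootstrap hypothesis $S_j \leq \eta$ propagates at every step, which via a standard continuous-induction argument in $j$ also furnishes the existence of $w$, and hence of $v$, on all of $I$. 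The principal obstacle, and the reason this is a nontrivial modification of the textbook additive-error perturbation theory, is that the forcing enters $(F + u + w)^3$ multiplicatively, so that the difference $(F + u + w)^3 - u^3$ contains mixed terms such as $3 F u^2$ and $3 F^2 u$ involving the large function $u$. Unlike in the usual setting where an additive error $e$ is controlled by $\|e\|_{L^1_t L^2_x}$ alone, each such mixed term must acquire a small prefactor in $\eta$ in order for the bootstrap to close, which is precisely what the partition of $I$ into subintervals of small $\|u\|_{L^3_t L^6_x}$-norm achieves.
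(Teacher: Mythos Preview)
Your proof is correct and is precisely the ``straightforward modification of the standard formulation'' that the paper alludes to without spelling out; the paper does not provide a detailed proof of this proposition. Your identification of the multiplicative structure of the error $(F+u+w)^3 - u^3$ (in particular the mixed terms $3Fu^2$, $3F^2u$) as the point requiring the subinterval decomposition is exactly the modification the authors have in mind.
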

 
We are now prepared to turn to the proof of Theorem \ref{thm:conditional_scattering}

\begin{proof}[Proof of Theorem \ref{thm:conditional_scattering}]
 Let $v(t)$ be the unique solution to the forced cubic nonlinear wave equation~\eqref{equ:forced_cubic} defined on its maximal interval of existence $I_\ast = (T_-, T_+)$ and satisfying the a priori energy bound~\eqref{equ:energy_hypothesis}. By Lemma~\ref{lem:local_theory}, it suffices to show that $\|v\|_{L^3_t L^6_x(I_\ast \times \bR^4)} < \infty$ in order to conclude that $v(t)$ exists globally and scatters to free waves as $t \to \pm \infty$. By time reversal symmetry it is enough to argue forward in time.
 
 We let $\varepsilon > 0$ be a small constant whose size will be fixed later depending only on the size~$M$ of the a priori energy bound~\eqref{equ:energy_hypothesis} for the solution $v(t)$. Then we partition the maximal forward time interval of existence $[0,T_+)$ of $v(t)$ into $N$ consecutive time intervals $I_j$, $j = 1, \ldots, N$, such that for each $j$, we have
 \[
  \| F \|_{L^3_t L^6_x(I_j \times \bR^4)} = \varepsilon,
 \]
 in particular it then holds that
 \[
  N \lesssim \frac{ \|F\|_{L^3_t L^6_x([0, T_+) \times\bR^4)}^3 }{\varepsilon^3}.
 \]
 We shall write $I_j \equiv [t_{j-1}, t_j]$ for $j = 1, \ldots, N-1$ with $t_0 = 0$ and $I_N \equiv [t_{N-1}, T_+)$. In order to show that $\|v\|_{L^3_t L^6_x([0,T_+)\times\bR^4)} < \infty$, we now bound the $L^3_t L^6_x(I_j\times\bR^4)$ norm of $v(t)$ on every interval~$I_j$, $j = 1, \ldots, N$. To this end, on each interval $I_j$, we compare the solution $v(t)$ of the forced cubic nonlinear wave equation
 \begin{equation}
  \left\{ \begin{aligned}
           -\partial_t^2 v + \Delta v &= (F+v)^3 \text{ on } I_j \times \bR^4, \\
	   (v, \partial_t v)|_{t=t_{j-1}} &= (v(t_{j-1}), \partial_t v(t_{j-1}))
          \end{aligned} \right.
 \end{equation}
 with the solution $u^{(j)}(t)$ of the standard defocusing cubic nonlinear wave equation with the same initial data at time $t=t_{j-1}$,
 \begin{equation}
  \left\{ \begin{aligned}
           -\partial_t^2 u^{(j)} + \Delta u^{(j)} &= (u^{(j)})^3 \text{ on } I_j \times \bR^4, \\
	   (u^{(j)}, \partial_t u^{(j)})|_{t=t_{j-1}} &= (v(t_{j-1}), \partial_t v(t_{j-1})),
          \end{aligned} \right.
 \end{equation}
 and infer the desired space-time bound for $v(t)$ on $I_j$ by applying the perturbation theory from Proposition~\ref{prop:pert}. We only treat the interval $I_1 \equiv [t_0, t_1]$ in more detail, as the other intervals are handled analogously. Consider the global solution $u^{(1)}(t)$ to the standard defocusing cubic nonlinear wave equation with initial data at time $t = t_0$ given by $(v(t_0), \partial_t v(t_0))$. Let $K \colon [0,\infty) \to [0,\infty)$ be the non-decreasing function from the statement of Theorem~\ref{thm:cubic}. Recalling that $M > 0$ denotes the size of the a priori energy bound~\eqref{equ:energy_hypothesis} for $v(t)$, by Theorem~\ref{thm:cubic} it holds that
 \[
  \| u^{(1)} \|_{L^3_t L^6_x(I_1 \times \bR^4)} \leq \| u^{(1)} \|_{L^3_t L^6_x(\bR\times\bR^4)} \leq K(M).
 \]
 By the perturbation result of Proposition \ref{prop:pert}, if we choose
 \[
  \varepsilon = \varepsilon_0 \bigl( K(M) \bigr)
 \]
 for $\varepsilon_0\bigl( K(M) \bigr)$ as in the statement of Proposition \ref{prop:pert}, then
 \begin{equation} \label{equ:finite_bd}
  \| u^{(1)} - v \|_{L_t^3 L_x^{6}(I_1 \times \bR^4)} \lesssim \varepsilon \exp \bigl( C K(M)^3 \bigr) \lesssim 1,
 \end{equation}
 from which we conclude that
 \begin{equation} 
  \| v \|_{L_t^3 L_x^{6}(I_1 \times \bR^4)} \leq \| u^{(1)} \|_{L_t^3 L_x^{6}(I_1 \times \bR^4)} + \| u^{(1)} - v \|_{L^3_t L^{6}_x(I_1 \times \bR^4)}  \lesssim K(M) + 1.
 \end{equation}
 Finally, we obtain the space-time bound~\eqref{equ:conditional_spacetime_bound} after summing up the bounds on the $L^3_t L^6_x$ norms of $v(t)$ on the finitely many time intervals $I_j$.
\end{proof}

\section{Proof of Theorem \ref{thm:scattering}: Energy bounds}

The main ingredient for the proof of Theorem~\ref{thm:scattering} is an approximate Morawetz estimate for the forced cubic nonlinear wave equation~\eqref{equ:forced_cubic} which we record in the following lemma.
\begin{lemma} \label{lem:morawetz}
 Let $I \subset \bR$ be a time interval and assume that $F \in L^3_t L^6_x(I \times \bR^4)$. Let $(v, \partial_t v) \in C \bigl( I; \dot{H}^1_x \times L^2_x \bigr)$ be a solution to~\eqref{equ:forced_cubic}. Then it holds that
 \begin{equation} \label{equ:morawetz}
  \begin{aligned}
   \int_I \int_{\bR^4} \frac{v^4}{|x|} \, dx \, dt \lesssim \| \nabla_{t,x} v \|_{L^\infty_t L^2_x(I \times \bR^4)}^2 + \| \nabla_x v \|_{L^{\infty}_t L^2_x(I \times \bR^4)} \bigl\| (F+v)^3 - v^3 \bigr\|_{L^1_t L^2_x(I \times \bR^4)}.  
  \end{aligned}
 \end{equation}
\end{lemma}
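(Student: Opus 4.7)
I would prove this by a Morawetz multiplier argument adapted to the forced equation, with the standard four-dimensional multiplier
\[
 Xv := \partial_r v + \frac{3}{2|x|} v, \qquad r = |x|.
\]
The plan is to test the equation $-\partial_t^2 v + \Delta v = (F+v)^3$ against $Xv$, integrate over $I \times \mathbb{R}^4$, and integrate by parts in both time and space. I would first carry out the computation for smooth, rapidly decaying solutions and then pass to the limit by density, using a radial cutoff near $x=0$ to handle the $1/|x|$ singularity of $X$; this is harmless in $\mathbb{R}^4$ because Hardy's inequality $\|v/|x|\|_{L^2_x} \lesssim \|\nabla_x v\|_{L^2_x}$ is available.

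Since $X$ is formally anti-self-adjoint on $L^2(\mathbb{R}^4)$, the time term integrates by parts cleanly, leaving only the temporal boundary contribution
\[
 \int_I \int_{\mathbb{R}^4} (-\partial_t^2 v) \cdot Xv \, dx \, dt = -\Bigl[ \int_{\mathbb{R}^4} \partial_t v \cdot Xv \, dx \Bigr]_{t_0}^{t_1}.
\]
A direct integration-by-parts computation in space (handling radial and angular pieces separately) yields
\[
 \int_{\mathbb{R}^4} \Delta v \cdot Xv \, dx = -\int_{\mathbb{R}^4} \frac{|\nabla_\omega v|^2}{|x|} \, dx \,-\, \frac{3}{4}\int_{\mathbb{R}^4} \frac{v^2}{|x|^3} \, dx,
\]
where $|\nabla_\omega v|^2 := |\nabla v|^2 - (\partial_r v)^2$ is the angular gradient. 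For the nonlinearity, I would split $(F+v)^3 = v^3 + \bigl[ (F+v)^3 - v^3 \bigr]$ and compute
\[
 \int_{\mathbb{R}^4} v^3 \cdot Xv \, dx = \frac{3}{4} \int_{\mathbb{R}^4} \frac{v^4}{|x|} \, dx,
\]
via $v^3 \partial_r v = \partial_r(v^4/4)$ together with the identity $\int_{\mathbb{R}^4} \partial_r f \, dx = -3 \int f/|x| \, dx$.

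Combining these identities and rearranging, the three manifestly non-negative terms $\tfrac{3}{4}\iint v^4/|x|$, $\iint |\nabla_\omega v|^2/|x|$, and $\tfrac{3}{4}\iint v^2/|x|^3$ all appear on the same side, while the other side is the sum of the temporal boundary term and the forcing error. Discarding the two positive angular/Hardy terms produces
\[
 \int_I \int_{\mathbb{R}^4} \frac{v^4}{|x|} \, dx \, dt \lesssim \sup_{t \in I} \Bigl| \int_{\mathbb{R}^4} \partial_t v \cdot Xv \, dx \Bigr| \,+\, \Bigl| \int_I \int_{\mathbb{R}^4} \bigl[(F+v)^3 - v^3\bigr] \cdot Xv \, dx \, dt \Bigr|.
\]
Both remaining terms are then controlled by Cauchy--Schwarz and Hardy: $\|Xv\|_{L^2_x(\mathbb{R}^4)} \lesssim \|\nabla_x v\|_{L^2_x(\mathbb{R}^4)}$ bounds the boundary term by $\|\nabla_{t,x} v\|_{L^\infty_t L^2_x}^2$ and the error by $\|\nabla_x v\|_{L^\infty_t L^2_x} \|(F+v)^3 - v^3\|_{L^1_t L^2_x}$, yielding the claimed estimate.

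The main technical obstacle is the rigorous justification of the multiplier calculation at the $\dot{H}^1_x \times L^2_x$ regularity level in the presence of the $1/|x|$ singularity, since the classical pointwise identities require $v$ to be smooth enough to apply $\Delta$ and $\partial_r$ pointwise. This is handled by a routine approximation argument: regularize the initial data (and $F$) to obtain smoother solutions, replace $1/|x|$ by a truncated version $\chi_\epsilon(x)/|x|$ that avoids the origin, derive the estimate for these approximations with uniform constants, and pass to the limit using the estimate itself together with Fatou's lemma on the left-hand side and dominated convergence on the right.
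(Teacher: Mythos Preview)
Your proof is correct and follows essentially the same route as the paper: the paper states the identity obtained by testing the equation against the multiplier $\frac{x}{|x|}\cdot\nabla_x v + \frac{3}{2}\frac{v}{|x|}$ (your $Xv$), then invokes the fundamental theorem of calculus and Hardy's inequality. Your proposal is more explicit about the individual integration-by-parts steps and the approximation/regularization justification, which the paper omits, but the underlying argument is identical.
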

\begin{proof}
 One verifies that the solution $v(t)$ to~\eqref{equ:forced_cubic} satisfies the identity
 \begin{align*}
  \partial_t \int_{\bR^4} \Bigl( - \frac{x}{|x|}  \cdot \nabla_x v \partial_t v - \frac{3}{2} \frac{v}{|x|} \partial_t v \Bigr) \, dx &= \frac{3}{4} \int_{\bR^4} \frac{v^4}{|x|} \, dx + \frac{3}{4} \int_{\bR^4} \frac{v^2}{|x|^3} \, dx + \int_{\bR^4} \frac{ |\nabla_x v|^2 - ( {\textstyle \frac{x}{|x|} } \cdot \nabla_x v )^2 }{|x|} \, dx \\
  &\quad \quad + \int_{\bR^4} \Bigl( \frac{3}{2} \frac{v}{|x|} + \frac{x}{|x|} \cdot \nabla_x v \Bigr) \bigl( (F+v)^3 - v^3 \bigr) \, dx. 
 \end{align*}
 Estimate~\eqref{equ:morawetz} then follows from the fundamental theorem of calculus and Hardy's inequality. 
\end{proof}

We now prove Theorem~\ref{thm:scattering}. 

\begin{proof}[Proof of Theorem~\ref{thm:scattering}]
 Let $v(t)$ be the unique solution to the Cauchy problem~\eqref{equ:forced_cubic} with maximal time interval of existence $I_{\ast}$ satisfying
 \[
  (v, \partial_t v) \in C \bigl(I_\ast; \dot{H}^1_x(\bR^4)\bigr) \cap L^3_{t,loc} L^6_x(I_\ast \times \bR^4) \times C \bigl( I_\ast; L^2_x(\bR^4) \bigr).
 \]
By Theorem~\ref{thm:conditional_scattering}, in order to conclude global existence and scattering, it suffices to show that
 \[
  \sup_{t \in I_\ast} E(v(t)) < \infty,
 \]
 where we recall that
 \[
  E(v(t)) := \int_{\bR^4} \frac{1}{2} |\nabla_x v(t)|^2 + \frac{1}{2} |\partial_t v(t)|^2 + \frac{1}{4} |v(t)|^4 \, dx.
 \]
 To this end we compute  
 \begin{equation} \label{equ:energy_differentiated}
  \partial_t E(v(t)) = \int_{\bR^4} \partial_t v \bigl( \partial_t^2 v - \Delta v + v^3 \bigr) \, dx = - \int_{\bR^4} \partial_t v \bigl( (F+v)^3 - v^3 \bigr) \, dx.
 \end{equation}
 Then we define for $T > 0$ with $T \in I_\ast$ the quantities
 \begin{align*}
  A(T) &:= \int_0^T | \partial_t E( v(t) )| \, dt, \\
  B(T) &:= \int_0^T \int_{\bR^4} \frac{v^4}{|x|} \, dx \, dt,
 \end{align*}
 and note that for any $0 \leq t \leq T$ we have 
 \[
  E(v(t)) \leq E(v(0)) + A(T).  
 \]
 Since it holds that
 \begin{align*}
  \bigl\| (F+v)^3 - v^3 \bigr\|_{L^1_t L^2_x([0,T]\times\bR^4)} &\lesssim \bigl\| |F|^3 + |F| v^2 \bigr\|_{L^1_t L^2_x([0,T]\times\bR^4)} \\
  &\lesssim \|F\|_{L^3_t L^6_x([0,T]\times\bR^4)}^3 + \bigl\| |x|^{\frac{1}{2}} F \bigr\|_{L^2_t L^\infty_x([0,T]\times\bR^4)} \bigl\| |x|^{-\frac{1}{2}} v^2 \bigr\|_{L^2_t L^2_x([0,T]\times\bR^4)},
 \end{align*}
 we infer from \eqref{equ:energy_differentiated} and the Morawetz estimate~\eqref{equ:morawetz} that
 \begin{equation} \label{equ:bounds_A_B}
  \begin{aligned}
   A(T) &\lesssim \bigl( E(v(0)) + A(T) \bigr)^{\frac{1}{2}} \bigl( \|F\|_{L^3_t L^6_x([0,T]\times\bR^4)}^3 + \bigl\| |x|^{\frac{1}{2}} F \bigr\|_{L^2_t L^\infty_x([0,T]\times\bR^4)} B(T)^{\frac{1}{2}} \bigr), \\
   B(T) &\lesssim E(v(0)) + A(T) \\
   &\quad \quad + \bigl( E(v(0)) + A(T) \bigr)^{\frac{1}{2}} \bigl( \|F\|_{L^3_t L^6_x([0,T]\times\bR^4)}^3 + \bigl\| |x|^{\frac{1}{2}} F \bigr\|_{L^2_t L^\infty_x([0,T]\times\bR^4)} B(T)^{\frac{1}{2}} \bigr).
  \end{aligned}
 \end{equation}
 From \eqref{equ:bounds_A_B} and a standard continuity argument we conclude that there exists a sufficiently small absolute constant $0 < \varepsilon \ll 1$ such that if 
 \[
  \|F\|_{L^3_t L^6_x([0,T]\times\bR^4)}^3 + \bigl\| |x|^{\frac{1}{2}} F \bigr\|_{L^2_t L^\infty_x([0,T]\times\bR^4)} = \varepsilon^2,
 \]
 then it holds that
 \[
  A(T) + B(T) \lesssim E(v(0)) + \varepsilon.
 \]
 In particular, we then have 
 \[
  \sup_{0 \leq t \leq T} E(v(t)) \lesssim E(v(0)) + 1.
 \]
 By divisibility of the $L^3_t L^6_x(\bR\times\bR^4)$ norm and of the $L^2_t L^\infty_x(\bR\times\bR^4)$ norm, we may iterate this argument finitely many times to obtain that
 \[
  \sup_{t \in I_\ast} E(v(t)) \leq C \exp \Bigl( C \bigl( \|F\|_{L^3_t L^6_x(\bR\times\bR^4)}^3 + \bigl\| |x|^{\frac{1}{2}} F \bigr\|_{L^2_t L^\infty_x(\bR\times\bR^4)}^2 \bigr) \Bigr) ( E(v(0)) + 1 )
 \]
 for some absolute constant $C > 0$. We then conclude from Theorem~\ref{thm:conditional_scattering} that in fact $I_\ast = \bR$, i.e. $v(t)$ exists globally in time, and $v(t)$ scatters to free waves as $t \to \pm \infty$.
\end{proof}

\section{Proof of Theorem \ref{thm:random_scattering}: Almost sure scattering} \label{sec:almost_sure_scattering}

Finally we turn to the proof of our almost sure scattering result for the energy-critical defocusing nonlinear wave equation on~$\bR^4$. It follows from Theorem~\ref{thm:scattering} and probabilistic a priori estimates on global space-time norms of the free wave evolution of the random initial data, which we establish in this section. To this end we first recall the following large deviation estimate.

\begin{lemma}[\protect{\cite[Lemma 3.1]{BT1}}] \label{lem:large_deviation_estimate}
 Let $\{g_n\}_{n=1}^{\infty}$ be a sequence of real-valued, independent, zero-mean random variables with associated distributions $\{\mu_n\}_{n=1}^{\infty}$ on a probability space $(\Omega, {\mathcal A}, \bP)$. Assume that the distributions satisfy the property that there exists $c > 0$ such that
 \begin{equation*}
  \biggl| \int_{-\infty}^{+\infty} e^{\gamma x} d\mu_n(x) \biggr| \leq e^{c \gamma^2} \text{ for  all } \gamma \in \bR \text{ and for all } n \in \mathbb{N}.
 \end{equation*}
 Then there exists $\alpha > 0$ such that for every $\lambda > 0$ and every sequence $\{c_n\}_{n=1}^{\infty} \in \ell^2(\bN;\bC)$ of complex numbers, 
 \begin{equation*}
  \bP \Bigl( \bigl\{ \omega : \bigl| \sum_{n=1}^{\infty} c_n g_n(\omega) \bigr| > \lambda \bigr\} \Bigr) \leq 2 \exp \biggl(- \alpha \frac{\lambda^2}{\sum_n |c_n|^2} \biggr). 
 \end{equation*}
 As a consequence there exists $C > 0$ such that for every $2 \leq p < \infty$ and every $\{c_n\}_{n=1}^{\infty} \in \ell^2(\bN; \bC)$,
 \begin{equation*}
  \Bigl\| \sum_{n=1}^{\infty} c_n g_n(\omega) \Bigr\|_{L^p_\omega(\Omega)} \leq C \sqrt{p} \Bigl( \sum_{n=1}^{\infty} |c_n|^2 \Bigr)^{1/2}.
 \end{equation*}
\end{lemma}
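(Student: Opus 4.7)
The plan is to prove this classical Khintchine-type tail bound via the exponential Chebyshev (Bernstein) method, and then derive the $L^p_\omega$ consequence by integrating the tail.

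First I would reduce to the case of real coefficients. Write $c_n = a_n + i b_n$ with $a_n, b_n \in \bR$. Since $\sum c_n g_n = \sum a_n g_n + i \sum b_n g_n$, if we have the tail bound for real coefficients, then $\{ |\sum c_n g_n| > \lambda \}$ is contained in $\{ |\sum a_n g_n| > \lambda/\sqrt{2} \} \cup \{ |\sum b_n g_n| > \lambda/\sqrt{2} \}$, and a union bound combined with $\sum a_n^2 + \sum b_n^2 = \sum |c_n|^2$ yields the complex-valued statement (with an adjusted constant $\alpha$). So from now on assume $c_n \in \bR$.

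Second, I would apply the exponential Chebyshev inequality. For any $\gamma > 0$,
\begin{equation*}
 \bP \Bigl( \sum_n c_n g_n > \lambda \Bigr) \leq e^{-\gamma \lambda} \, \bE \Bigl[ e^{\gamma \sum_n c_n g_n} \Bigr].
\end{equation*}
By independence of the $g_n$ and the hypothesis on the moment generating functions applied with parameter $\gamma c_n$,
\begin{equation*}
 \bE \Bigl[ e^{\gamma \sum_n c_n g_n} \Bigr] = \prod_n \int_{-\infty}^{+\infty} e^{\gamma c_n x} \, d\mu_n(x) \leq \prod_n e^{c \gamma^2 c_n^2} = e^{c \gamma^2 \sigma^2},
\end{equation*}
where $\sigma^2 := \sum_n c_n^2$. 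Optimizing the bound $e^{-\gamma\lambda + c \gamma^2 \sigma^2}$ in $\gamma > 0$ by choosing $\gamma = \lambda/(2 c \sigma^2)$ gives $\bP(\sum c_n g_n > \lambda) \leq \exp(-\lambda^2/(4 c \sigma^2))$. Repeating the argument for $-\sum c_n g_n$ (also zero-mean, with the same moment generating function bound after flipping the sign of the $c_n$), a union bound yields the two-sided tail estimate with $\alpha = 1/(4c)$ (modulo the factor $1/2$ absorbed by the reduction to real coefficients).

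Third, for the $L^p_\omega$ bound I would use the layer-cake representation. Setting $S := \sum_n c_n g_n$ and using the tail bound just proved,
\begin{equation*}
 \bE\bigl[|S|^p\bigr] = p \int_0^\infty \lambda^{p-1} \bP( |S| > \lambda ) \, d\lambda \leq 2p \int_0^\infty \lambda^{p-1} e^{-\alpha \lambda^2/\sigma^2} \, d\lambda.
\end{equation*}
The change of variables $u = \alpha \lambda^2/\sigma^2$ converts this into a Gamma integral,
\begin{equation*}
 \bE\bigl[ |S|^p \bigr] \leq p \, \sigma^p \, \alpha^{-p/2} \, \Gamma(p/2),
\end{equation*}
and Stirling's bound $\Gamma(p/2) \leq C^p (p/2)^{p/2}$ yields $\bE[|S|^p]^{1/p} \leq C \sqrt{p} \, \sigma$, which is the claimed $L^p_\omega$ estimate.

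There is no real obstacle here: the exponential moment assumption is tailor-made for the Bernstein argument, and the only subtlety is the routine reduction from complex to real coefficients and the bookkeeping of the constant $\alpha$ through the optimization and the reduction steps. The Stirling bound in the last step is the standard way of extracting the $\sqrt{p}$ factor that is ultimately responsible for the logarithmic loss in subsequent applications to large deviation estimates for the free wave evolution.
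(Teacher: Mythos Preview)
Your argument is correct and is precisely the standard proof of this Khintchine-type inequality (exponential Chebyshev, optimization in $\gamma$, then layer-cake plus Stirling for the $L^p_\omega$ bound). The paper itself does not supply a proof: the lemma is merely recalled from \cite[Lemma~3.1]{BT1}, where exactly this Bernstein argument is carried out, so your proposal matches the intended route.

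One minor bookkeeping remark: after the reduction to real coefficients via the union bound, the constant in front of the exponential becomes $4$ rather than $2$. This is harmless, since for $\lambda^2/\sum_n |c_n|^2$ small the bound $2 e^{-\alpha \lambda^2/\sum_n |c_n|^2} \geq 1$ is trivially satisfied, and for large values one may halve $\alpha$ to absorb the extra factor of~$2$; you might mention this explicitly.
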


Moreover, we present a lemma that will be used to estimate the probability of certain events. Its proof is a straightforward adaptation of the proof of Lemma~4.5 in \cite{Tz10}.

\begin{lemma} \label{lem:probability_estimate}
Let $F$ be a real-valued measurable function on a probability space $(\Omega, {\mathcal A}, \bP)$. Suppose that there exist $C_0 > 0$, $K > 0$ and $p_0 \geq 1$ such that for every $p \geq p_0$ we have
\begin{align*}
 \| F \|_{L^p_{\omega}(\Omega)} \leq \sqrt{p} \, C_0 K. 
\end{align*}
Then there exist $c > 0$ and $C_1 > 0$, depending on $C_0$ and $p_0$ but independent of $K$, such that for every $\lambda > 0$,
\begin{align*}
  \bP \bigl( \bigl\{ \omega \in \Omega : |F(\omega)| > \lambda \bigr\} \bigr) \leq C_1 e^{- c \lambda^2/K^2}.
\end{align*}
In particular, it follows that 
\[
 \bP \bigl( \bigl\{ \omega \in \Omega : |F(\omega)| < \infty \bigr\} \bigr) = 1.
\]
\end{lemma}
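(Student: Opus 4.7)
The plan is to apply Chebyshev's (Markov's) inequality at a cleverly chosen exponent $p$, in the spirit of standard sub-Gaussian tail bounds from sub-Gaussian moment growth.

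First, I would observe that for any $p \geq p_0$, Chebyshev's inequality together with the hypothesis yields
\begin{equation}
 \bP \bigl( \{ \omega \in \Omega : |F(\omega)| > \lambda \} \bigr) \leq \lambda^{-p} \| F \|_{L^p_\omega(\Omega)}^p \leq \lambda^{-p} \bigl( \sqrt{p} \, C_0 K \bigr)^p = \biggl( \frac{\sqrt{p} \, C_0 K}{\lambda} \biggr)^p.
\end{equation}
The goal is then to minimize the right-hand side in $p$ subject to the constraint $p \geq p_0$.

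Next, I would choose $p$ so that $\sqrt{p} \, C_0 K / \lambda \sim e^{-1/2}$, i.e. set $p = \lambda^2 / (e \, C_0^2 K^2)$, provided this value is $\geq p_0$. With this choice the bound becomes $e^{-p/2} = \exp(-\lambda^2/(2e\,C_0^2 K^2))$, giving the desired Gaussian tail with $c = 1/(2e C_0^2)$. For the complementary range $\lambda^2/K^2 \leq e \, C_0^2 \, p_0$, the constraint $p \geq p_0$ cannot be met with the optimal $p$, but here the tail estimate is trivial since $\bP \leq 1 \leq e^{c p_0 e C_0^2} \cdot e^{-c\lambda^2/K^2}$, and this factor can be absorbed into $C_1$. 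This furnishes the constants $c > 0$ and $C_1 > 0$ depending only on $C_0$ and $p_0$.

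For the final assertion, I would use a Borel--Cantelli type reasoning, or more directly note that
\begin{equation}
 \bP \bigl( \{ |F| = \infty \} \bigr) \leq \bP \bigl( \{ |F| > \lambda \} \bigr) \leq C_1 e^{-c \lambda^2/K^2}
\end{equation}
for every $\lambda > 0$; letting $\lambda \to \infty$ forces $\bP(\{ |F| = \infty \}) = 0$.

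The main obstacle is essentially bookkeeping: making sure the optimization $p = \lambda^2/(e\,C_0^2 K^2)$ is admissible (i.e. $\geq p_0$) and handling the small-$\lambda$ regime by a trivial bound, so that the resulting constants $c, C_1$ depend only on $C_0$ and $p_0$ and are independent of $K$. No deeper ideas are required beyond the standard Chebyshev-plus-moment-optimization device.
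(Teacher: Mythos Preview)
Your proposal is correct and is precisely the standard Chebyshev-plus-moment-optimization argument that the paper has in mind; indeed the paper does not even give a proof, merely referring to the analogous Lemma~4.5 in \cite{Tz10}, whose proof is exactly the computation you outline. Your handling of the two regimes (optimal $p \geq p_0$ versus small $\lambda$) and the extraction of constants depending only on $C_0$ and $p_0$ is clean and complete.
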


We may now establish a large deviation estimate on the global $L^3_t L^6_x(\bR \times \bR^4)$ norm of the free wave evolution of randomized initial data.

\begin{proposition} \label{prop:large_deviation_L3L6}
 Let $\frac{1}{3} \leq s < 1$ and let $f \in H^s_x(\bR^4)$. Denote by $f^\omega$ the randomization of $f$ as defined in~\eqref{equ:bighsrandomization}. Then there exist absolute constants $C > 0$ and $c > 0$ such that for any $\lambda > 0$ it holds that
 \begin{equation}
  \bP \Bigl( \Bigl\{ \omega \in \Omega : \bigl\| e^{\pm i t |\nabla|} P_{>4} f^\omega \bigr\|_{L^3_t L^6_x(\bR \times \bR^4)} > \lambda  \Bigr\} \Bigr) \leq C \exp \Bigl( - c \lambda^2 \bigl\| |\nabla|^s P_{>4} f \bigr\|_{L^2_x(\bR^4)}^{-2} \Bigr).
 \end{equation}
 In particular, we have for almost every $\omega \in \Omega$ that
 \begin{equation}
  \bigl\| e^{\pm i t |\nabla|} P_{>4} f^\omega \bigr\|_{L^3_t L^6_x(\bR \times \bR^4)} < \infty.
 \end{equation}
\end{proposition}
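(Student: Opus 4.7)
The strategy is the standard one: bound the $L^p_\omega$ moments of $\| e^{\pm it|\nabla|} P_{>4} f^\omega \|_{L^3_t L^6_x}$ and then convert to a probability bound via Lemma \ref{lem:probability_estimate}. Writing
\[
 e^{\pm it|\nabla|} P_{>4} f^\omega = \sum_{k \in \bZ^4} g_k(\omega) \, e^{\pm it|\nabla|} \widetilde{P}_k f, \qquad \widetilde{P}_k := P_{>4} P_k,
\]
I would first use Minkowski's inequality (valid for $p \geq 6$) to interchange $L^p_\omega$ with $L^3_t L^6_x$, and then apply the Gaussian large deviation estimate of Lemma \ref{lem:large_deviation_estimate} pointwise in $(t,x)$ (after decomposing the $g_k$ into the independent family $\{g_0, \textup{Re}\,g_k, \textup{Im}\,g_k\}_{k \in \cI}$). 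This yields, for each $(t,x)$,
\[
 \Bigl\| \sum_k g_k(\omega) \, e^{\pm it|\nabla|} \widetilde{P}_k f(x) \Bigr\|_{L^p_\omega} \lesssim \sqrt{p} \, \Bigl( \sum_k \bigl| e^{\pm it|\nabla|} \widetilde{P}_k f(x) \bigr|^2 \Bigr)^{1/2}.
\]
Another application of Minkowski (valid since $3, 6 \geq 2$) moves the $\ell^2_k$ sum outside the $L^3_t L^6_x$ norm, reducing matters to bounding $\bigl( \sum_k \| e^{\pm it|\nabla|} \widetilde{P}_k f \|_{L^3_t L^6_x}^2 \bigr)^{1/2}$.

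The deterministic input is the refined Strichartz estimate of Corollary \ref{cor:KlTa_gain_applied_Pk} applied to the admissible pair $(q,r) = (3,6)$, which gives
\[
 \| e^{\pm it|\nabla|} \widetilde{P}_k f \|_{L^3_t L^6_x(\bR \times \bR^4)} \lesssim \bigl\| |\nabla|^{1/3} \widetilde{P}_k f \bigr\|_{L^2_x(\bR^4)}.
\]
Since $\widetilde{P}_k f$ is Fourier-supported in $\{|\xi| \geq 4\}$ and $s \geq \tfrac{1}{3}$, we have $|\xi|^{1/3} \leq |\xi|^s$ on that support, hence $\| |\nabla|^{1/3} \widetilde{P}_k f \|_{L^2} \leq \| |\nabla|^s \widetilde{P}_k f \|_{L^2}$. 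The bounded overlap of the frequency supports of the $\widetilde{P}_k$ then gives, by Plancherel, $\sum_k \| |\nabla|^s \widetilde{P}_k f \|_{L^2}^2 \lesssim \| |\nabla|^s P_{>4} f \|_{L^2}^2$. Combining these steps produces
\[
 \bigl\| e^{\pm it|\nabla|} P_{>4} f^\omega \bigr\|_{L^p_\omega L^3_t L^6_x(\bR \times \bR^4)} \lesssim \sqrt{p} \, \bigl\| |\nabla|^s P_{>4} f \bigr\|_{L^2_x(\bR^4)}
\]
for all $p \geq 6$, and Lemma \ref{lem:probability_estimate} converts this into the stated Gaussian-type tail bound, from which almost sure finiteness follows immediately.

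The argument presents no serious obstacle once the tools are in place, but the conceptual point worth highlighting is precisely the role of the Klainerman--Tataru refinement. Without it, the naive Strichartz estimate for the pair $(3,6)$ would demand regularity $s = 1$, while using only the unit-scale Bernstein inequality together with Strichartz for a lower $\tilde r$ would give $s \geq \tfrac{1}{2}$; it is the extra dispersive gain $2^{(1/2 - 1/r)(\ell - m)}$ in Proposition \ref{prop:kt_strichart} that converts the frequency scale $|k|$ into exactly $|k|^{1/q} = |k|^{1/3}$ and so pushes the threshold down to $s \geq \tfrac{1}{3}$.
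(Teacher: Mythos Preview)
Your proof is correct and follows essentially the same approach as the paper: Minkowski to interchange $L^p_\omega$ with $L^3_t L^6_x$, Lemma~\ref{lem:large_deviation_estimate} to pass to the square function, another Minkowski to pull $\ell^2_k$ outside, then Corollary~\ref{cor:KlTa_gain_applied_Pk} at $(q,r)=(3,6)$ and Plancherel/bounded overlap to sum. Your concluding paragraph on why the Klainerman--Tataru refinement is what pushes the threshold down to $s\geq \tfrac13$ is a nice addition that the paper does not spell out.
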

\begin{proof}
 By Minkowski's inequality, the large deviation estimate from Lemma~\ref{lem:large_deviation_estimate} and Corollary~\ref{cor:KlTa_gain_applied_Pk} we obtain for all $6 \leq p < \infty$ that
 \begin{align*}
  \bigl\| e^{\pm i t |\nabla|} P_{>4} f^\omega \bigr\|_{L^p_\omega L^3_t L^6_x} &\lesssim \sqrt{p} \biggl( \sum_{k \in \bZ^4} \bigl\| e^{\pm i t |\nabla|} P_k P_{>4} f \bigr\|_{L^3_t L^6_x}^2 \biggr)^{\frac{1}{2}} \\
  &\lesssim \sqrt{p} \biggl( \sum_{k \in \bZ^4} \bigl\| |\nabla|^{\frac{1}{3}} P_k P_{>4} f \bigr\|_{L^2_x}^2 \biggr)^{\frac{1}{2}} \\
  &\lesssim \sqrt{p} \, \bigl\| |\nabla|^s P_{>4} f \bigr\|_{L^2_x}.
 \end{align*}
 The assertion now follows from Lemma~\ref{lem:probability_estimate}.
\end{proof}

Next, we prove a large deviation estimate on a global weighted $L^2_t L^\infty_x(\bR \times \bR^4)$ norm of the free wave evolution of randomized radially symmetric initial data. This estimate is one of the main novelties of this article.

\begin{proposition} \label{prop:large_deviation_L2Linfty}
 Let $\frac{1}{2} < s < 1$ and let $f \in H^s_x(\bR^4)$ be radially symmetric. Denote by $f^\omega$ the randomization of $f$ as defined in~\eqref{equ:bighsrandomization}. Then there exist absolute constants $C > 0$ and $c > 0$ such that for any $\lambda > 0$ it holds that
 \begin{equation}
  \bP \Bigl( \Bigl\{ \omega \in \Omega : \bigl\| |x|^{\frac{1}{2}} e^{\pm i t |\nabla|} P_{>4} f^\omega \bigr\|_{L^2_t L^\infty_x(\bR \times \bR^4)} > \lambda  \Bigr\} \Bigr) \leq C \exp \Bigl( - c \lambda^2 \bigl\| |\nabla|^s P_{>4} f \bigr\|_{L^2_x(\bR^4)}^{-2} \Bigr).
 \end{equation}
 In particular, we have for almost every $\omega \in \Omega$ that
 \begin{equation}
  \bigl\| |x|^{\frac{1}{2}} e^{\pm i t |\nabla|} P_{>4} f^\omega \bigr\|_{L^2_t L^\infty_x(\bR \times \bR^4)} < \infty.
 \end{equation}
\end{proposition}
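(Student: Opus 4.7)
\emph{Reduction via the probabilistic machinery.} By Lemma~\ref{lem:probability_estimate}, the stated tail estimate will follow once one establishes, for all sufficiently large $p$, the $L^p_\omega$ bound
\begin{equation*}
\bigl\| \, \| |x|^{1/2} e^{\pm it|\nabla|} P_{>4} f^\omega \|_{L^2_t L^\infty_x(\bR\times\bR^4)} \, \bigr\|_{L^p_\omega(\Omega)} \lesssim \sqrt{p} \, \bigl\| |\nabla|^s P_{>4} f \bigr\|_{L^2_x(\bR^4)}.
\end{equation*}
Since Minkowski's inequality does not commute $L^p_\omega$ with $L^\infty_x$ directly, relax the spatial norm via the Sobolev embedding $W^{\delta,q}_x \hookrightarrow L^\infty_x(\bR^4)$ with $\delta q > 4$, choosing $q < \infty$ large and $\delta > 0$ small enough that $\delta < s$ (feasible thanks to the hypothesis $s > 1/2$). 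For $p \geq \max(2, q)$, Minkowski moves $L^p_\omega$ innermost, and a pointwise application of the Gaussian large-deviation estimate Lemma~\ref{lem:large_deviation_estimate} reduces the task to the deterministic square-function bound
\begin{equation*}
\bigl\| |x|^{1/2} \tilde{S}(t,x) \bigr\|_{L^2_t L^q_x} \lesssim \bigl\| |\nabla|^s P_{>4} f \bigr\|_{L^2_x}, \qquad \tilde{S}(t,x) := \Bigl( \sum_{|k| > 4} \bigl| \langle \nabla \rangle^\delta e^{\pm it|\nabla|} P_k f(x) \bigr|^2 \Bigr)^{1/2}.
\end{equation*}

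\emph{Exploiting radiality and splitting the space-time region.} Because $f$ is radial, so is $\langle \nabla \rangle^\delta e^{\pm it|\nabla|} f$ at each time $t$. Applying Lemma~\ref{lem:radialish_sobolev} slice-by-slice with any $s' > 0$ satisfying $s' + \delta \leq s$ yields the uniform-in-$t$ pointwise bound
\begin{equation*}
|x|^{3/2} \tilde{S}(t,x) \lesssim \bigl\| \langle \nabla \rangle^\delta e^{\pm it|\nabla|} P_{>4} f \bigr\|_{H^{s'}_x} \lesssim \bigl\| |\nabla|^s P_{>4} f \bigr\|_{L^2_x}.
\end{equation*}
Decompose the integration domain into the exterior region $\{|x| \geq 1+|t|\}$ and the interior region $\{|x| \leq 1+|t|\}$. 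On the exterior, the radial Sobolev bound gives
\begin{equation*}
\bigl\| |x|^{1/2} \tilde{S}(t,\cdot) \bigr\|_{L^\infty_x(|x| \geq 1+|t|)} \lesssim (1+|t|)^{-1} \bigl\| |\nabla|^s P_{>4} f \bigr\|_{L^2_x},
\end{equation*}
which is square-integrable in $t$. The interior region is the main difficulty: the pointwise radial Sobolev estimate alone only yields $\sup_{|x| \leq 1+|t|} |x|^{1/2} \tilde{S}(t,x) \lesssim 1$ uniformly in $t$, and so one cannot obtain the required $L^2_t$ integrability from it. Instead one must extract dispersive decay from the free-wave phase $e^{\pm i t |\xi|}$. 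Following the stationary-phase analysis in the proof of Lemma~\ref{lem:radialish_sobolev}, but now carrying through the additional phase $\pm t \rho$, integration by parts in the radial frequency variable $\rho$ generates decay in powers of $||x|\cos\theta \pm t|^{-1}$ away from the propagation set, while near the stationary set the angular integration by parts together with the unit-scale localization of $\psi_k$ provides the usual dispersive factor. Summing over the unit-scale pieces $k$ and over dyadic distances to the cone $|x| \sim |t|$ produces an $L^2_t L^q_x$-integrable bound on the interior.

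\emph{The main obstacle.} The decisive step is the interior estimate: deriving the time-integrable bound in the region $|x| \leq 1+|t|$ simultaneously in the presence of the $|x|^{1/2}$ weight, the $L^\infty_x$ (effectively $L^q_x$) spatial norm, and the square-function aggregation over the non-radial unit-scale pieces $e^{\pm it|\nabla|} P_k f$. The radiality of $f$, transferred to the square function through Lemma~\ref{lem:radialish_sobolev}, is the mechanism that makes the stationary-phase contributions summable across $k$; without it the non-radial cutoffs $\psi_k$ would not assemble to give any useful control.
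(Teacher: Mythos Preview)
Your reduction via Lemma~\ref{lem:probability_estimate}, Sobolev embedding, and Lemma~\ref{lem:large_deviation_estimate} is the right opening, and the exterior region $\{|x|\geq 1+|t|\}$ is handled correctly (with $L^q_x$ in place of $L^\infty_x$ you get $(1+|t|)^{-1+4/q}$, which is still $L^2_t$-integrable for $q>8$). But the proposal has two genuine gaps.

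First, the passage from $\|\langle\nabla\rangle^\delta(|x|^{1/2}g)\|_{L^q_x}$ to $\||x|^{1/2}\langle\nabla\rangle^\delta g\|_{L^q_x}$ is a commutator you have not justified; $\langle\nabla\rangle^\delta$ does not commute with the weight. The paper treats this by writing $\langle\nabla\rangle^\varepsilon P_N\bigl((1-\psi(x))|x|^{1/2}e^{\pm it|\nabla|}P_M f^\omega\bigr)$ and splitting into the cases $N\sim M$, $N\gg M$, $N\ll M$, the off-diagonal cases being controlled through Bernstein and the smoothness of the weight.

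Second, and more seriously, the interior region is the entire content of the proposition and your sketch does not close. You propose integration by parts in the radial frequency $\rho$ to extract decay in $||x|\cos\theta\pm t|$, but this forces derivatives onto $\hat f(\rho)$, which you do not control since $f$ is merely in $H^s_x$ with $s<1$. The angular integration by parts from Lemma~\ref{lem:radialish_sobolev} gives spatial decay but no time decay, so a separate mechanism is needed. The paper does \emph{not} perform any new stationary-phase analysis here; instead it interpolates the weighted square-function bound $\||x|^{3/2}(\sum_k|\cdot|^2)^{1/2}\|_{L^\infty_x}$ from Lemma~\ref{lem:radialish_sobolev} against the unweighted $L^{2r/3}_x$ bound to land at $\||x|^{1/2}(\sum_k|\cdot|^2)^{1/2}\|_{L^r_x}\lesssim(\sum_k\|\cdot\|_{L^{6r/(r+6)}_x}^2)^{1/2}$, and then obtains the $L^2_t$ integrability from Strichartz estimates. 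The crucial point is that the pair $(2,\frac{6r}{r+6})$ is \emph{not} admissible, but it lies between the admissible endpoint $(2,6)$ (handled by Corollary~\ref{cor:KlTa_gain_applied_Pk}) and the radial-admissible pair $(2,4)$ (Proposition~\ref{prop:radial_strichartz_estimates}), so radiality of $f$ enters a second time through the radial Strichartz range. This interpolation-plus-radial-Strichartz step is the missing idea in your proposal.
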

\begin{proof}
Let $\psi \in C_c^\infty(\bR^4)$ be a radial smooth bump function satisfying $\psi(x) = 1$ for $|x| \leq 1$ and $\psi(x) = 0$ for $|x| > 2$. Define for $L \in 2^{\bN}$,
\[
 \psi_L(x) := \psi(x/L) - \psi(2x/L).
\]
Then we have for any $x \in \bR^4$ that
\[
 1 = \psi(x) + \sum_{L \geq 2} \psi_{L}(x).
\]

Now let $0 < \varepsilon \leq \frac{1}{10} (s - \frac{1}{2})$ and $r > \frac{4}{\varepsilon}$. By the Sobolev embedding $W_x^{s,r}(\bR^4) \subset L^\infty_x(\bR^4)$ with $s > \frac{4}{r}$, we have for all $r \leq p < \infty$ that
\begin{align*}
 &\bigl\| |x|^{\frac{1}{2}} e^{\pm i t |\nabla|} P_{>4} f^\omega \bigr\|_{L^p_{\omega} L^2_t L^{\infty}_x} \\
 &\lesssim \bigl\| \psi(x) |x|^{\frac{1}{2}} e^{\pm i t |\nabla|} P_{>4} f^\omega \bigr\|_{L^p_{\omega} L^2_t L^{\infty}_x} + \bigl\| (1 - \psi(x)) |x|^{\frac{1}{2}} e^{\pm i t |\nabla|} P_{>4} f^\omega \bigr\|_{L^p_{\omega} L^2_t L^{\infty}_x} \\
 &\lesssim \bigl\| e^{\pm i t |\nabla|} P_{>4} f^\omega \bigr\|_{L^p_{\omega} L^2_t L^{\infty}_x} + \bigl\| (1 - \psi(x)) |x|^{\frac{1}{2}} e^{\pm i t |\nabla|} P_{>4} f^\omega \bigr\|_{L^p_{\omega} L^2_t L^{\infty}_x} \\
 &\lesssim \bigl\| \langle \nabla \rangle^{\varepsilon} e^{\pm i t |\nabla|} P_{>4} f^\omega \bigr\|_{L^p_{\omega} L^2_t L^{r}_x} + \bigl\| \langle \nabla \rangle^{\varepsilon} (1 - \psi(x)) |x|^{\frac{1}{2}} e^{\pm i t |\nabla|} P_{>4} f^\omega \bigr\|_{L^p_{\omega} L^2_t L^{r}_x} \\
 &\equiv I + II.
\end{align*}

\noindent {\bf Term I.} By Minkowski's inequality, Lemma~\ref{lem:large_deviation_estimate} and Corollary~\ref{cor:KlTa_gain_applied_Pk} we obtain for all $r \leq p < \infty$ that
\begin{align*}
  \bigl\| \langle \nabla \rangle^{\varepsilon} e^{\pm i t |\nabla|} P_{>4} f^\omega \bigr\|_{L^p_{\omega} L^2_t L^{r}_x} &\lesssim \sqrt{p} \biggl( \sum_{k \in \bZ^4} \bigl\| \langle \nabla \rangle^{\varepsilon} e^{\pm i t |\nabla|} P_k P_{>4} f \bigr\|_{L^2_t L^r_x}^2 \biggr)^{\frac{1}{2}} \\
  &\lesssim \sqrt{p} \biggl( \sum_{k \in \bZ^4} \bigl\| |\nabla|^{\frac{1}{2}} \langle \nabla \rangle^{\varepsilon} P_k P_{>4} f \bigr\|_{L^2_x}^2 \biggr)^{\frac{1}{2}} \\
  &\lesssim \sqrt{p} \, \bigl\| |\nabla|^s P_{>4} f \bigr\|_{L^2_x}.
\end{align*}

\medskip 

\noindent {\bf Term II.} We decompose this term dyadically into
\begin{align*}
 \langle \nabla \rangle^{\varepsilon} (1 - \psi(x)) |x|^{\frac{1}{2}} e^{\pm i t |\nabla|} P_{>4} f^\omega = \sum_{M > 4} \sum_N \langle \nabla \rangle^{\varepsilon} P_N \bigl( (1 - \psi(x)) |x|^{\frac{1}{2}} e^{\pm i t |\nabla|} P_M f^\omega \bigr)
\end{align*}
and we distinguish the following cases:

\medskip 

\noindent {\bf Case 1:} $N \sim M$. This is the most delicate case. By Bernstein estimates, Minkowski's inequality and Lemma~\ref{lem:large_deviation_estimate}, we have for all $r \leq p < \infty$ that
\begin{equation} \label{equ:case1_start}
 \begin{aligned}
  &\biggl\| \sum_{M > 4} \sum_{N \sim M} \langle \nabla \rangle^{\varepsilon} P_N \bigl( (1 - \psi(x)) |x|^{\frac{1}{2}} e^{\pm i t |\nabla|} P_M f^\omega \bigr) \biggr\|_{L^p_\omega L^2_t L^r_x} \\
  &\lesssim \sum_{M > 4} (1 + M^{\varepsilon}) \bigl\| |x|^{\frac{1}{2}} e^{\pm i t |\nabla|} P_M f^\omega \bigr\|_{L^p_\omega L^2_t L^r_x} \\
  &\lesssim \sqrt{p} \sum_{M > 4} M^{\varepsilon} \biggl\| \Bigl( \sum_{k \in \bZ^4} \bigl| |x|^{\frac{1}{2}} e^{\pm i t |\nabla|} P_k P_M f \bigr|^2 \Bigr)^{\frac{1}{2}} \biggr\|_{L^2_t L^r_x}.
 \end{aligned}
\end{equation}
Interpolating for any dyadic $M > 4$ the square-function estimate from Lemma~\ref{lem:radialish_sobolev}
\[
 \biggl\| \Bigl( \sum_{k \in \bZ^4} \bigl| |x|^{\frac{3}{2}} e^{\pm i t |\nabla|} P_k P_M f \bigr|^2 \Bigr)^{\frac{1}{2}} \biggr\|_{L^\infty_x} \lesssim \Bigl( \sum_{k \in \bZ^4} \bigl\| |\nabla|^{3 \varepsilon} e^{\pm i t |\nabla|} P_k P_M f \bigr\|_{L^2_x}^2 \Bigr)^{\frac{1}{2}}
\]
with the trivial bound 
\[
 \biggl\| \Bigl( \sum_{k \in \bZ^4} \bigl| e^{\pm i t |\nabla|} P_k P_M f \bigr|^2 \Bigr)^{\frac{1}{2}} \biggr\|_{L^{\frac{2r}{3}}_x} \lesssim \Bigl( \sum_{k \in \bZ^4} \bigl\| e^{\pm i t |\nabla|} P_k P_M f \bigr\|_{L^{\frac{2r}{3}}_x}^2 \Bigr)^{\frac{1}{2}},
\]
yields that
\[
 \biggl\| \Big( \sum_{k\in\bZ^4} \bigl| |x|^{\frac{1}{2}} e^{\pm i t |\nabla|} P_k P_M f \bigr|^2 \Bigr)^{\frac{1}{2}} \biggr\|_{L^r_x} \lesssim \biggl( \sum_{k\in\bZ^4} \bigl\| |\nabla|^{\varepsilon} e^{\pm i t |\nabla|} P_k P_M f \bigr\|_{L^{\frac{6r}{r+6}}_x}^2 \biggr)^{\frac{1}{2}}. 
\]
Hence, for the last line of \eqref{equ:case1_start} we obtain the bound
\begin{equation} \label{equ:case1_intermediate}
\sqrt{p} \sum_{M > 4} M^{\varepsilon} \biggl\| \Bigl( \sum_{k \in \bZ^4} \bigl| |x|^{\frac{1}{2}} e^{\pm i t |\nabla|} P_k P_M f \bigr|^2 \Bigr)^{\frac{1}{2}} \biggr\|_{L^2_t L^r_x} \lesssim  \sqrt{p} \sum_{M > 4} M^{2 \varepsilon} \biggl( \sum_{k \in \bZ^4} \bigl\| e^{\pm i t |\nabla|} P_k P_M f \bigr\|_{L^2_t L^{\frac{6 r}{r+6}}_x}^2 \biggr)^{\frac{1}{2}}.
\end{equation}
Here we encounter the obstacle that the pair $(2, \frac{6 r}{r+6})$ is not admissible. Nonetheless, a larger range of Strichartz pairs is available in the radial case. Specifically, we exploit this by interpolating $(2, \frac{6r}{r+6})$ between the endpoint admissible pair $(2, 6)$ and the radial-admissible pair $(2,4)$. Hence, by Corollary~\ref{cor:KlTa_gain_applied_Pk} and the radial Strichartz estimate from Proposition~\ref{prop:radial_strichartz_estimates}, we find that for $\theta = 1 - \frac{12}{r}$,
\begin{align*}
 \biggl( \sum_{k\in\bZ^4} \bigl\| e^{\pm i t |\nabla|} P_k P_M f \bigr\|_{L^2_t L^{\frac{6 r}{r+6}}_x}^2 \biggr)^{\frac{1}{2}}  &\lesssim \biggl( \sum_{k\in\bZ^4} \bigl\| e^{\pm i t |\nabla|} P_k P_M f \bigr\|_{L^2_t L^{6 }_x}^2 \biggr)^{\frac{\theta}{2}} \biggl( \sum_{k \in \bZ^4} \bigl\| e^{\pm i t |\nabla|} P_k P_M f \bigr\|_{L^2_t L^4_x}^2 \biggr)^{\frac{1-\theta}{2}} \\
 &\lesssim \biggl( \sum_{k\in\bZ^4} \bigl\| |\nabla|^{\frac{1}{2}} P_k P_M f \bigr\|_{L^2_x}^2 \biggr)^{\frac{\theta}{2}} \biggl( M^4 \, \bigl\| e^{\pm i t |\nabla|} P_M f \bigr\|_{L^2_t L^4_x}^2 \biggr)^{\frac{1-\theta}{2}} \\
 &\lesssim \biggl( \sum_{k\in\bZ^4} \bigl\| |\nabla|^{\frac{1}{2}} P_k P_M f \bigr\|_{L^2_x}^2 \biggr)^{\frac{\theta}{2}} \biggl( M^4 \, \bigl\| |\nabla|^{\frac{1}{2}} P_M f \bigr\|_{L^2_x}^2 \biggr)^{\frac{1-\theta}{2}} \\ 
 &\lesssim M^{\frac{1}{2} \theta} M^{2(1-\theta)} M^{\frac{1}{2}(1-\theta)} \| P_M f \|_{L^2_x} \\
 &\simeq M^{\frac{1}{2} + \frac{24}{r}} \| P_M f \|_{L^2_x} \\
 &\lesssim M^{\frac{1}{2} + 6 \varepsilon} \| P_M f \|_{L^2_x}.
\end{align*}
Thus, putting everything together, we obtain that
\begin{align*}
 \sqrt{p} \sum_{M > 4} M^{2 \varepsilon} \biggl( \sum_{k \in \bZ^4} \bigl\| e^{\pm i t |\nabla|} P_k P_M f \bigr\|_{L^2_t L^{\frac{6 r}{r+6}}_x}^2 \biggr)^{\frac{1}{2}} \lesssim  \sqrt{p} \sum_{M > 4} M^{2 \varepsilon} M^{\frac{1}{2} + 6 \varepsilon } \| P_M f \|_{L^2_x}.
\end{align*}
After an application of Cauchy-Schwarz and since $\varepsilon \leq \frac{1}{10}(s-\frac{1}{2})$, we conclude that the previous line is bounded by
\[
 \sqrt{p} \sum_{M > 4} M^{\frac{1}{2} + 8 \varepsilon} \| P_M f \|_{L^2_x} \lesssim \sqrt{p} \, \bigl\| |\nabla|^{\frac{1}{2} + 9 \varepsilon} P_{>4} f \bigr\|_{L^2_x} \lesssim \sqrt{p} \, \bigl\| |\nabla|^s P_{>4} f \bigr\|_{L^2_x}.
\]

\medskip 

\noindent {\bf Case 2:} $N \gg M$. Here we write
\begin{align*}
 \sum_{M > 4} \sum_{N \gg M} \langle \nabla \rangle^{\varepsilon} P_N \bigl( (1 - \psi(x)) |x|^{\frac{1}{2}} e^{\pm i t |\nabla|} P_M f^\omega \bigr) = \sum_{M > 4} \sum_{N \gg M} \sum_{L \geq 2} \langle \nabla \rangle^{\varepsilon} P_N \bigl( \psi_L(x) |x|^{\frac{1}{2}} e^{\pm i t |\nabla|} P_M f^\omega \bigr).
\end{align*}
Since $N \gg M$, by Fourier support considerations we must have that
\[
 P_N \bigl( \psi_L(x) |x|^{\frac{1}{2}} e^{\pm i t |\nabla|} P_M f^\omega \bigr) = P_N \Bigl( \widetilde{P}_N \bigl( \psi_L(x) |x|^{\frac{1}{2}} \bigr) e^{\pm i t |\nabla|} P_M f^\omega \Bigr).
\]
Thus, using Minkowski's inequality, Bernstein estimates, Lemma~\ref{lem:large_deviation_estimate} and Corollary~\ref{cor:KlTa_gain_applied_Pk} we obtain for all $r \leq p < \infty$ that
\begin{align*}
 &\biggl\| \sum_{M > 4} \sum_{N \gg M} \sum_{L \geq 2} \langle \nabla \rangle^{\varepsilon} P_N \Bigl( \widetilde{P}_N \bigl( \psi_L(x) |x|^{\frac{1}{2}} \bigr) e^{\pm i t |\nabla|} P_M f^\omega \Bigr) \biggr\|_{L^p_{\omega} L^2_t L^r_x} \\
 &\lesssim \sum_{M > 4} \sum_{N \gg M} \sum_{L \geq 2} N^{\varepsilon} \bigl\| \widetilde{P}_N \bigl( \psi_L(x) |x|^{\frac{1}{2}} \bigr) \bigr\|_{L^\infty_x} \bigl\| e^{\pm i t |\nabla|} P_M f^\omega \bigr\|_{L^p_\omega L^2_t L^r_x} \\
 &\lesssim \sum_{M > 4} \sum_{N \gg M} \sum_{L \geq 2} N^{-1 + \varepsilon} \bigl\| \nabla_x \widetilde{P}_N \bigl( \psi_L(x) |x|^{\frac{1}{2}} \bigr) \bigr\|_{L^\infty_x} \sqrt{p} \, \bigl\| |\nabla|^{\frac{1}{2}} P_M f \bigr\|_{L^2_x} \\
 &\lesssim \sqrt{p} \sum_{M > 4} \sum_{N \gg M} \sum_{L \geq 2} N^{-1 + \varepsilon} L^{-\frac{1}{2}} \bigl\| |\nabla|^{\frac{1}{2}} P_M f \bigr\|_{L^2_x} \\
 &\lesssim \sqrt{p} \bigl\| |\nabla|^s P_{>4} f \bigr\|_{L^2_x}.
\end{align*}

\medskip 

\noindent {\bf Case 3:} $N \ll M$. By Fourier support considerations we must have that 
\[
 P_N \bigl( \psi_L(x) |x|^{\frac{1}{2}} e^{\pm i t |\nabla|} P_M f^\omega \bigr) =  P_N \Bigl( \widetilde{P}_M \bigl( \psi_L(x) |x|^{\frac{1}{2}} \bigr) e^{\pm i t |\nabla|} P_M f^\omega \Bigr).
\]
Using Bernstein estimates we conclude that then
\begin{align*}
 &\biggl\| \sum_{M>4} \sum_{N \ll M} \sum_{L \geq 2} \langle \nabla \rangle^{\varepsilon} P_N \Bigl( \widetilde{P}_M \bigl( \psi_L(x) |x|^{\frac{1}{2}} \bigr) e^{\pm i t |\nabla|} P_M f^\omega \Bigr) \biggr\|_{L^p_{\omega} L^2_t L^r_x} \\
 &\lesssim \sum_{M>4} \sum_{N \ll M} \sum_{L \geq 2} (1 + N^{\varepsilon}) N^{\frac{2}{3}-\frac{4}{r}} \Bigl\| P_N \Bigl( \widetilde{P}_M \bigl( \psi_L(x) |x|^{\frac{1}{2}} \bigr) e^{\pm i t |\nabla|} P_M f^\omega \Bigr) \Bigr\|_{L^p_{\omega} L^2_t L^{6 }_x} \\
 &\lesssim \sum_{M>4} \sum_{N \ll M} \sum_{L \geq 2} M^{\varepsilon} N^{\frac{2}{3}-\frac{4}{r}} \bigl\| \widetilde{P}_M \bigl( \psi_L(x) |x|^{\frac{1}{2}} \bigr) \bigr\|_{L^\infty_x} \bigl\| e^{\pm i t |\nabla|} P_M f^\omega \bigr\|_{L^p_\omega L^2_t L^{6 }_x} \\
 &\lesssim \sum_{M>4} \sum_{N \ll M} \sum_{L \geq 2} M^{-1 + \varepsilon} N^{\frac{2}{3}-\frac{4}{r}} \bigl\| \nabla_x \widetilde{P}_M \bigl( \psi_L(x) |x|^{\frac{1}{2}} \bigr) \bigr\|_{L^\infty_x} \bigl\| e^{\pm i t |\nabla|} P_M f^\omega \bigr\|_{L^p_\omega L^2_t L^{6 }_x} \\
 &\lesssim \sum_{M>4} \sum_{N \ll M} \sum_{L \geq 2} M^{-1 + \varepsilon} N^{\frac{2}{3}-\frac{4}{r}} L^{-\frac{1}{2}} \bigl\| e^{\pm i t |\nabla|} P_M f^\omega \bigr\|_{L^p_\omega L^2_t L^{6 }_x}.
\end{align*}
Now using Minkowski's inequality, the large deviation estimate from Lemma~\ref{lem:large_deviation_estimate} and Corollary~\ref{cor:KlTa_gain_applied_Pk}, we find that the last line is bounded by
\begin{align*}
 \sqrt{p} \sum_{M>4} \sum_{N \ll M} \sum_{L \geq 2} M^{-1 + \varepsilon} N^{\frac{2}{3}-\frac{4}{r}} L^{-\frac{1}{2}} \bigl\| |\nabla|^{\frac{1}{2}} P_M f \bigr\|_{L^2_x} \lesssim \sqrt{p} \, \bigl\| |\nabla|^s P_{>4} f \bigr\|_{L^2_x}.
\end{align*}

\medskip 

Putting these estimates together, we have obtained that for all $r \leq p < \infty$,
\[
 \bigl\| |x|^{\frac{1}{2}} e^{\pm i t |\nabla|} P_{>4} f^\omega \bigr\|_{L^p_\omega L^2_t L^\infty_x} \lesssim \sqrt{p} \, \bigl\| |\nabla|^s P_{>4} f \bigr\|_{L^2_x}.
\]
The claim now follows from Lemma~\ref{lem:probability_estimate}.
\end{proof}

Finally, we are in a position to provide the proof of Theorem~\ref{thm:random_scattering}, which now follows readily from Theorem~\ref{thm:scattering} and the large deviation estimates established in the previous two propositions.

\begin{proof}[Proof of Theorem~\ref{thm:random_scattering}]
By Proposition~\ref{prop:large_deviation_L3L6} and Proposition~\ref{prop:large_deviation_L2Linfty} we have for almost every $\omega \in \Omega$,
\begin{equation} \label{equ:proof_of_random_scattering_large_deviation_bound1}
 \bigl\| S(t)\bigl( P_{>4} f_0^\omega, P_{>4} f_1^\omega \bigr) \bigr\|_{L^3_t L^6_x(\bR \times \bR^4)} < \infty  
\end{equation}
and
\begin{equation} \label{equ:proof_of_random_scattering_large_deviation_bound2}
 \bigl\| |x|^{\frac{1}{2}} S(t)\bigl( P_{>4} f_0^\omega, P_{>4} f_1^\omega \bigr) \bigr\|_{L^2_t L^\infty_x(\bR \times \bR^4)} < \infty.
\end{equation}
We seek a solution to the energy-critical cubic nonlinear wave equation~\eqref{equ:nlw_main_theorem} with initial data $(f_0^\omega, f_1^\omega) \in H^s_x(\bR^4) \times H^{s-1}_x(\bR^4)$ of the form
\[
 u(t) = S(t)\bigl( P_{>4} f_0^\omega, P_{>4} f_1^\omega \bigr) + v(t),
\]
where the nonlinear part $v(t)$ satisfies
\begin{equation*}
 \left\{ \begin{aligned}
          -\partial_t^2 v + \Delta v &= \bigl( S(t)\bigl( P_{>4} f_0^\omega, P_{>4} f_1^\omega \bigr) + v \bigr)^3 \text{ on } \bR \times \bR^4, \\
          (v, \partial_t v)|_{t=0} &= \bigl( P_{\leq 4} f_0^\omega, P_{\leq 4} f_1^\omega \bigr) \in \dot{H}^1_x(\bR^4) \times L^2_x(\bR^4).
         \end{aligned} \right.
\end{equation*}
Setting $F := S(t)\bigl( P_{>4} f_0^\omega, P_{>4} f_1^\omega \bigr)$, the assertion of Theorem~\ref{thm:random_scattering} follows immediately from \eqref{equ:proof_of_random_scattering_large_deviation_bound1}, \eqref{equ:proof_of_random_scattering_large_deviation_bound2} and Theorem~\ref{thm:scattering}.
\end{proof}

\newcommand{\SortNoop}[1]{}
\ifx\undefined\bysame
\newcommand{\bysame}{\leavevmode\hbox to3em{\hrulefill}\,}
\fi

\end{document}